\newcommand{\Or}{\mathrm{O}}
\newcommand{\SO}{\mathrm{SO}}
\newcommand{\C}{\mathbb{C}}
\renewcommand{\H}{\mathbb{H}}
\newcommand{\N}{\mathbb{N}}
\newcommand{\Q}{\mathbb{Q}}
\newcommand{\Z}{\mathbb{Z}}
\newcommand{\R}{\mathbb{R}}
\newcommand{\PSL}{\mathrm{PSL}}
\newcommand{\GL}{\mathrm{GL}}
\newcommand{\floor}[1]{\lfloor {#1} \rfloor}
\newcommand{\Isom}{\mathrm{Isom}}
\theoremstyle{plain}
\newtheorem{theorem}{Theorem}[section]
\newtheorem{lemma}[theorem]{Lemma}
\newtheorem{conjecture}[theorem]{Conjecture}
\newtheorem{remark}[theorem]{Remark}
\newtheorem{definition}[theorem]{Definition}
\newtheorem{mainthm}{Theorem}
\newtheorem{maincor}[mainthm]{Corollary}
\title{Counting Salem numbers arising from arithmetic hyperbolic orbifolds}
\author[M. Chu]{Michelle Chu}
\address{School of Mathematics\\ University of Minnesota \\ 
127 Vincent Hall 206 Church St. SE\\ Minneapolis, MN 55455, United States}
\email{mchu@umn.edu}
\author[P. G. P. Murillo]{Plinio G. P. Murillo}
\address{Instituto de Matemática e Estatística\\ Universidade Federal Fluminense \\
Rua Professor Marcos Waldemar de Freitas Reis,  s/n, Campus do Gragoatá, São Domingos, Niterói, RJ.
24210-201, Brazil}
\email{pliniom@id.uff.br}
\author[O. Romero]{Otto Romero}
\address{Centro de Investigaci\'on en Matem\'aticas AC, Jalisco s/n, Valenciana,
Guanajuato, Gto. 36023 M\'exico}
\email{otto.romero@cimat.mx}
\author[L. Thompson]{Lola Thompson}
\address{Universiteit Utrecht, Hans Freudenthalgebouw\\
Budapestlaan 6\\
3584 CD Utrecht, The Netherlands}
\email{l.thompson@uu.nl}
\begin{document}

\begin{abstract}
The relationship between Salem numbers and short geodesics has been fruitful in quantitative studies of arithmetic hyperbolic orbifolds, particularly in dimensions 2 and 3. In this article, we push these connections even further. The primary goals are: (1) to bound the proportion of Salem numbers of degree up to $n+1$ in the commensurability class of classical arithmetic lattices in any odd dimension $n$; (2) to improve lower bounds for the strong exponential growth of averages of multiplicities in the geodesic length spectrum of non-compact arithmetic orbifolds. In order to accomplish these goals, we bound, for a fixed square-free integer $D$, the count of Salem numbers with minimal polynomial $f$ satisfying $f(1)f(-1)=-D$ in $\Q^{\times}/\Q^{\times 2}$. To do this, we make use of results on the distribution of Salem numbers, as well as classical methods for counting Pythagorean triples and Gauss' lattice-counting argument. To this end, we give a generalization of the count of Pythagorean triples and provide an elementary proof which may be of independent interest.
\end{abstract}

\subjclass{11F06, 11R06, 57N16, 53C22, 11D45}
\keywords{Arithmetic hyperbolic orbifolds, Salem numbers, geodesic lengths, counting solutions to Diophantine equations}

\maketitle

\section{Introduction}\label{s:intro}

A \emph{Salem number} is a real algebraic integer $\lambda>1$ such that all of its Galois conjugates except $\lambda^{-1}$ have absolute value equal to 1. The minimal polynomial $f(x)$ of a Salem number is necessarily symmetric and of even degree. In addition to the roots $\lambda$ and $\lambda^{-1}$, all other roots appear in complex pairs along the unit circle. 
Salem numbers arise in many areas of mathematics, and their study is closely related to the elusive Lehmer's problem on Mahler measure. 

The distribution of Salem numbers was studied by G{\"o}tze and Gusakova in \cite{GG20}, where they obtained an asymptotic count of the Salem numbers of fixed degree with bounded Mahler measure. In this article, we restrict this to count the Salem numbers of fixed degree with bounded Mahler measure whose minimal polynomials satisfy a certain geometrically-motivated condition.

\begin{mainthm} \label{ThmSalemCount}
Let $m$ be a positive integer and $D$ a square-free positive integer. The number of Salem polynomials $f$ of degree $2m$ for which $f(1)f(-1)\equiv -D$ in $\Q^{\times}/\Q^{\times 2}$ with Mahler measure $\leq Q$ is bounded above by
\[
    \frac{\kappa(m,D)}{\pi\sqrt{D}}Q^{m-1}\log Q + O(Q^{m-1})
\]
where $\kappa(m,D)$ is a constant depending only on $D$ and $m$.
\end{mainthm}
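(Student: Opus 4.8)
The plan is to transfer the problem across the \emph{trace} (Chebyshev) correspondence, where it becomes a lattice-point count controlled by the two integers $f(1)$ and $f(-1)$. For a Salem polynomial $f$ of degree $2m$, the substitution $y=x+x^{-1}$ yields a monic degree-$m$ integer polynomial $P$ with $f(x)=x^{m}P(x+x^{-1})$: the reciprocal pair $\lambda,\lambda^{-1}$ contributes the unique root $y_0=\lambda+\lambda^{-1}>2$ of $P$, while the $m-1$ conjugate pairs on the unit circle contribute roots $2\cos\theta_j\in(-2,2)$. Hence $P$ is totally real with exactly one root above $2$, and since $M(f)=\lambda$ we have $M(f)\le Q\iff y_0\le Q+Q^{-1}$. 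The arithmetic datum transfers cleanly, since $f(1)=P(2)$ and $f(-1)=(-1)^mP(-2)$; moreover $P(2)<0$ and $(-1)^mP(-2)>0$, so $f(1)f(-1)<0$ automatically and the condition $f(1)f(-1)\equiv-D$ in $\Q^{\times}/\Q^{\times2}$ is equivalent to the squarefree part of $|P(2)P(-2)|$ being $D$, i.e.\ $|P(2)P(-2)|=Dk^2$ for some $k\in\N$ (here using that $D$ is squarefree). I would thus count monic $P\in\Z[y]$ of degree $m$ with one root in $(2,Q+Q^{-1}]$, the remaining $m-1$ roots in $(-2,2)$, and $|P(2)P(-2)|=Dk^2$.

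First I would coordinatize by the coefficient vector of $P$ and isolate the two independent integer linear functionals $u:=P(2)$ and $v:=P(-2)$. The root constraints force $|c_{m-1}|\asymp y_0\le Q$ and keep the lower-order symmetric functions of the small roots bounded; consequently, after fixing $u$ and $v$, the remaining $m-2$ coefficients range over a region of volume $O(Q^{m-2})$. This is where the distributional estimates of G\"otze--Gusakova \cite{GG20} enter, supplying the fiber volumes uniformly in $(u,v)$. Since $|u|,|v|\lesssim Q$, the total is bounded as
\[
\#\{f\}\;\ll\;\sum_{\substack{|u|,|v|\lesssim Q\\ |uv|=Dk^2\ \text{some }k}}\#\{\text{completions to a valid }P\}\;\ll\;Q^{m-2}\,\#\bigl\{(u,v):|u|,|v|\lesssim Q,\ |uv|=Dk^2\bigr\},
\]
reducing everything to counting admissible pairs $(u,v)$ in a box of size $Q$ whose product is $D$ times a square.

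This last count is the promised generalization of counting Pythagorean triples: diagonalizing via $s=u+v,\ t=u-v$ turns $|uv|=Dk^2$ into $t^2-s^2=4Dk^2$, a family of quadratic forms whose solutions with $|s|,|t|\lesssim Q$ are counted by Gauss' lattice-point argument. Summing the divisor-type contribution of each multiplier $k\le Q/\sqrt D$ produces a main term of order $\tfrac{1}{\sqrt D}\,Q\log Q$, with the normalization $\pi^{-1}$ emerging from the area of the relevant region; combining with the fiber bound $Q^{m-2}$ gives the asserted $\tfrac{\kappa(m,D)}{\pi\sqrt D}\,Q^{m-1}\log Q+O(Q^{m-1})$, where $\kappa(m,D)$ absorbs the congruences pinning $(c_0,\dots,c_{m-1})$ to $\Z^m$ together with the fiber-volume constant. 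I expect the crux to be controlling these two mechanisms simultaneously: extracting the exact main constant from the lattice/divisor sum over $k$ (with its truncation and the square-multiplier restriction) while keeping every fiber-volume and boundary discrepancy below $O(Q^{m-1})$, so that the arithmetic $\log Q$ term is cleanly separated from the geometric remainder.
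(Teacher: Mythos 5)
Your outline is correct, and at the structural level it is the same decomposition the paper uses: extract from $f$ two integers carrying the condition $f(1)f(-1)=-Dk^2$, count solutions of the resulting Pythagorean-type equation in a box of side $O(Q)$, and multiply by a uniform $O(Q^{m-2})$ bound on the number of completions per pair (irreducibility is immaterial for an upper bound, as you implicitly use). The differences are in execution. The paper never passes to the trace polynomial: it sets $A=\sum_{k\ \mathrm{even}}c_k$, $C=\sum_{k\ \mathrm{odd}}c_k$, so that $f(1)f(-1)=A^2-C^2$ and condition (ii) is literally $A^2+DB^2=C^2$; your $(u,v)=(P(2),P(-2))$ with $s=u+v$, $t=u-v$ is the same data after the Chebyshev substitution. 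For the fiber bound, the paper uses a partition-function count (\Cref{th:partitionfn}, \Cref{lem:ck count}) to get an explicit $\kappa_0(m)Q^{m-2}$; your version (solve the two linear equations for two of the coefficients, let the remaining $m-2$ range over $O(Q)$ values each) suffices, but crediting it to G\"otze--Gusakova is a misattribution: \Cref{thm:GG Salem count} is a global count with no fiber information, and the paper invokes it only in the proof of \Cref{ThmSalemProportion}. The most substantive divergence is the Diophantine count. The paper proves \Cref{th:diop. sol.}, an asymptotic with explicit constant obtained by rationally parametrizing the conic and counting coprime parameter points in elliptical sectors (this is where Gauss and M\"obius genuinely enter); you instead count \emph{all} pairs with $|uv|=Dk^2$ via a sum over the multiplier $k$, getting $\asymp Q\log Q/\sqrt{D}$ --- and note that for fixed $k$ this is a divisor count from $(s-t)(s+t)=4Dk^2$, not an area/perimeter estimate, so ``Gauss'' is not the right name there either. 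Your accounting is in fact the one that genuinely produces the $\log Q$: the pairs $(f(1),f(-1))$ need not be coprime, whereas the proof of \Cref{th:diop. sol.} restricts to $\gcd(A,C)=1$ and has main term linear in $X$; in the paper the $\log Q$ of \Cref{ThmSalemCount} thus enters as slack, while in your route it is forced by the imprimitive solutions. What the paper's route buys is the explicit constant of \Cref{th:Count with constants}; what yours buys is a count that transparently includes every pair the application needs. To complete your sketch, the two items to write out are routine: the uniform fiber bound, and the pair count $O_D(Q\log Q)$ (for instance, $|uv|=Dk^2$ with $D$ squarefree forces $|u|=gD_1a^2$, $|v|=gD_2b^2$ with $D_1D_2=D$ and $g$ squarefree, and summing $\sqrt{Q/(gD_1)}\,\sqrt{Q/(gD_2)}$ over $g\le CQ$ and the $2^t$ factorizations of $D$ gives $\ll 2^t Q\log Q/\sqrt{D}$).
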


The condition $f(1)f(-1)\equiv -D$ has applications to the study of arithmetic hyperbolic orbifolds. Indeed, Bayer-Fluckiger \cite{Bayer-Fluckinger} stated this condition as a necessary condition for the existence of an isometry in a quadratic space with characteristic polynomial $f$. 
It is known that Salem numbers are closely related to the lengths of geodesics in arithmetic hyperbolic orbifolds.
Recent work of Emery, Ratcliffe, and Tschantz made this relationship more explicit.

\begin{theorem}\cite[Theorem 1.1]{EmeryRatcliffeTschantz}\label{th:ERT 1.1}
    Let $\Gamma\subset\mathrm{Isom}(\H^n)$ be a classical arithmetic lattice of the first type defined over a totally real number field $K$. Let $\gamma$ be a hyperbolic element of $\Gamma$, let $\ell(\gamma)$ be the translation length of $\gamma,$ and let $\lambda=e^{\ell(\gamma)}$. Then 
    \begin{enumerate}
        \item $\lambda$ is a Salem number, and $\deg_{K}(\lambda)=\deg_{\Q}(\lambda)\leq n+1$

        \item $K$ is a subfield of $\Q(\lambda +\lambda^{-1})$, and $[\Q(\lambda+\lambda^{-1}):K]=\deg_K(\gamma)/2$.  
    \end{enumerate}
\end{theorem}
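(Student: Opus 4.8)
The plan is to reason inside the quadratic--form model that underlies a classical arithmetic lattice of the first type. First I would fix an \emph{admissible} quadratic form $q$ of signature $(n,1)$ over $K$ whose orthogonal group realizes the commensurability class of $\Gamma$; here admissibility means that for every non-identity real embedding $\sigma\colon K\to\R$ the conjugate form $q^{\sigma}$ is positive definite. Realizing $\gamma$ inside $\Or(q,K)$ and using that $\Gamma$ preserves an integral structure, its characteristic polynomial $\chi_{\gamma}$ is monic with coefficients in $\mathcal{O}_K$, so every eigenvalue of $\gamma$ is an algebraic integer. The geometric input I would record at this stage is the eigenvalue description of a hyperbolic isometry: on the signature $(1,1)$ plane spanned by the axis, $\gamma$ acts as a boost with eigenvalues $\lambda=e^{\ell(\gamma)}$ and $\lambda^{-1}$, while on the orthogonal definite complement it acts through an element of $\Or(n-1)$, contributing $n-1$ eigenvalues on the unit circle. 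Since $\gamma$ preserves $q$, the polynomial $\chi_{\gamma}$ is reciprocal, so the list of eigenvalues is closed under inversion.

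The Salem property is the conceptual heart, and I would obtain it from a compactness argument spread across the archimedean places of $K$. For the identity embedding the eigenvalues are exactly $\lambda$, $\lambda^{-1}$, and the $n-1$ rotational eigenvalues of modulus $1$ described above. For each non-identity embedding $\sigma$, the matrix $\gamma^{\sigma}$ preserves the positive definite form $q^{\sigma}$, hence lies in the compact group $\Or(n+1)$, and therefore all of its eigenvalues have modulus $1$. Every Galois conjugate of $\lambda$ is a root of some $\chi_{\gamma^{\sigma}}$, so all conjugates other than $\lambda$ and $\lambda^{-1}$ lie on the unit circle, while $\lambda=e^{\ell(\gamma)}>1$ is real; together with the fact that the conjugates coming from non-identity embeddings populate the unit circle, this shows $\lambda$ is a genuine Salem number rather than a Pisot number or a degenerate quadratic unit. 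The bound $\deg_K(\lambda)\le n+1$ is then immediate, since the minimal polynomial of $\lambda$ over $K$ divides $\chi_{\gamma}$, which has degree $n+1$.

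For the field-theoretic statements I would argue directly with the Galois action. To prove $K\subseteq\Q(\lambda+\lambda^{-1})$, take any automorphism $\tau$ of $\overline{\Q}$ fixing $\lambda+\lambda^{-1}$; then $\tau(\lambda)$ is a root of $x^{2}-(\lambda+\lambda^{-1})x+1$, so $\tau(\lambda)\in\{\lambda,\lambda^{-1}\}$ has modulus different from $1$. By the previous paragraph a conjugate of $\lambda$ off the unit circle can only be produced by the identity embedding, which forces $\tau|_{K}=\mathrm{id}$; hence $\tau$ fixes $K$ pointwise, and Galois theory yields the containment. The degree assertions---the relation between $\deg_K(\lambda)$ and $\deg_{\Q}(\lambda)$ in part~(1) and the index formula $[\Q(\lambda+\lambda^{-1}):K]=\deg_K(\gamma)/2$ in part~(2)---I would then extract by bookkeeping in the tower $\Q\subseteq K\subseteq\Q(\lambda+\lambda^{-1})\subseteq\Q(\lambda)$, using the degree-two relation $[\Q(\lambda):\Q(\lambda+\lambda^{-1})]=2$ coming from $x^{2}-(\lambda+\lambda^{-1})x+1$, and tracking exactly how the $K$-conjugates of $\lambda$ sit among its $\Q$-conjugates.

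The step I expect to be the main obstacle is precisely this last bookkeeping. The compactness and reciprocity arguments pin down the \emph{moduli} of the conjugates very cleanly, but turning that into the exact degree relations---matching the count of $K$-conjugates against the count of $\Q$-conjugates, confirming that $\lambda^{-1}$ and enough unit-circle eigenvalues actually occur in the minimal polynomial of $\lambda$ rather than merely in $\chi_{\gamma}$, and thereby ruling out the degenerate cases---requires careful control of repeated eigenvalues and of which unit-circle conjugates are shared between the identity embedding and the remaining embeddings. Getting these relations on the nose, rather than only up to the evident divisibilities, is where the real work lies.
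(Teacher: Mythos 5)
This statement is not proved in the paper at all: it is quoted verbatim (with attribution) from Emery--Ratcliffe--Tschantz, so there is no internal proof to compare against, and your proposal must be judged on its own terms. Your strategy is the right one, and it is the strategy of the original source: realize $\gamma$ in $\SO(q,K)$ for an admissible form $q$, deduce that all eigenvalues of $\gamma$ are algebraic integers from the arithmetic structure (minor caveat: $\Gamma$ need not preserve $\mathcal{O}_K^{n+1}$ itself, only a full $\mathcal{O}_K$-lattice; alternatively pass to a power $\gamma^m\in\SO(q,\mathcal{O}_K)$), use the eigenvalue picture of a hyperbolic isometry at the identity place, compactness of $\Or(q^{\sigma},\R)$ at the non-identity places, and the Galois-fixing argument for $K\subseteq\Q(\lambda+\lambda^{-1})$. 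That last argument, as you give it, is correct and complete. The issue you flag about $\lambda^{-1}$ genuinely occurring as a $\Q$-conjugate is a real but small gap with a standard fix: since $\det\gamma=1$ and all eigenvalues of $\gamma$ are algebraic integers, $\lambda$ is an algebraic unit; if $\lambda^{-1}$ were not a conjugate, then every conjugate of $\lambda$ other than $\lambda$ would lie on the unit circle, so $|N_{\Q(\lambda)/\Q}(\lambda)|=\lambda$ would force $\lambda\in\Z_{\geq 2}$, contradicting that $\lambda$ is a unit.

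The genuine problem is the final step, which you describe as ``bookkeeping in the tower'': no bookkeeping can succeed, because the degree relations as transcribed in this paper are not correct for $K\neq\Q$, and you did not detect this. Once $K\subseteq\Q(\lambda+\lambda^{-1})\subseteq\Q(\lambda)$ is established, one has $K(\lambda)=\Q(\lambda)$, hence
\[
\deg_K(\lambda)=[\Q(\lambda):K]=\frac{\deg_\Q(\lambda)}{[K:\Q]},
\]
so the assertion $\deg_K(\lambda)=\deg_\Q(\lambda)$ in part (1) is equivalent to $K=\Q$ and contradicts part (2) otherwise; the relation that is actually provable is $\deg_\Q(\lambda)=[K:\Q]\,\deg_K(\lambda)$ together with $\deg_K(\lambda)\leq n+1$. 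Similarly, the formula $[\Q(\lambda+\lambda^{-1}):K]=\deg_K(\gamma)/2$ in part (2) is immediate field theory if $\deg_K(\gamma)$ means the degree of $\lambda$ over $K$ (since $K(\lambda+\lambda^{-1})=\Q(\lambda+\lambda^{-1})$ and $[K(\lambda):K(\lambda+\lambda^{-1})]=2$), but it is false if $\deg_K(\gamma)$ means the degree of the minimal polynomial of the matrix $\gamma$: take $K=\Q$, $q=\langle 1,1,1,-1\rangle$, and $\gamma=1\oplus B$ where $B\in\SO(\langle 1,1,-1\rangle,\Z)$ is hyperbolic with eigenvalues $\lambda,1,\lambda^{-1}$ and $\lambda=3+2\sqrt{2}$; then $\gamma$ is a hyperbolic element whose matrix minimal polynomial $(x-1)(x^2-6x+1)$ has degree $3$, while $[\Q(\lambda+\lambda^{-1}):\Q]=1$. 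So a correct treatment must reinterpret (or correct) the quoted statement before proving it; the literal statement admits no proof. This discrepancy is harmless for the paper itself, which only invokes the theorem for lattices defined over $K=\Q$, in which case parts (1) and (2) reduce to statements your argument essentially establishes.
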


The theorem above implies that a Salem number $\lambda$ can only be realized by an isometry in a classical arithmetic lattice $\Gamma\subset\mathrm{Isom}(\H^n)$ defined over the rational numbers $\Q$ if $\deg(\lambda)\leq n+1$. We consider the following question. 
\begin{quote}\itshape
    What is the proportion of Salem numbers of degree $2m\leq n+1$ realized by a classical arithmetic hyperbolic lattice of the first type of dimension $n$ defined over $\Q$?
\end{quote}

This question was addressed in \cite{BLMT22} in the case where $m=2$, $n=3$, and $\Gamma$ is a Bianchi group. We obtain the following theorem, which bounds the proportion of Salem numbers of degree up to $n+1$ in the commensurability class of classical arithmetic hyperbolic lattices in odd dimensions $n$. Note that this theorem applies to all non-cocompact arithmetic lattices in odd dimensions but also to some cocompact lattices in dimension 3.

\begin{mainthm} \label{ThmSalemProportion}
 Let $q$ be a rational quadratic form with signature $(n,1)$ with $n$ odd and reduced determinant $-D$. The number of Salem numbers $\lambda\leq Q$ realized in the commensurability class of $\SO(q,\Z)$ is bounded above by 
    \[ \frac{\kappa(m,D)}{\pi\sqrt{D}}Q^{m-1}\log Q + O(Q^{m-1})
    \]
    where $2m=n+1$ and the positive constants $D$ and $\kappa(m,D)$ depend only on $q$.
\end{mainthm}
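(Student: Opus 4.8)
The plan is to reduce the statement to \Cref{ThmSalemCount} by combining the geometric constraints of \Cref{th:ERT 1.1} with the arithmetic necessary condition of Bayer-Fluckiger. The key observation is that for a Salem number $\lambda$ its Mahler measure equals $\lambda$ itself, since $\lambda$ is the unique root of its minimal polynomial lying outside the closed unit disk; hence counting Salem numbers $\lambda\leq Q$ is identical to counting their minimal polynomials with Mahler measure $\leq Q$. It therefore suffices to produce a fixed square-free $D$, depending only on $q$, such that every Salem number realized in the commensurability class of $\SO(q,\Z)$ has minimal polynomial $f$ of degree at most $n+1=2m$ with $f(1)f(-1)\equiv -D$ in $\Q^{\times}/\Q^{\times 2}$. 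Since the realized Salem numbers then form a subset of those enumerated in \Cref{ThmSalemCount}, the upper bound follows.

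First I would make the reduction precise. Any $\lambda\leq Q$ realized in the class arises as $\lambda=e^{\ell(\gamma)}$ for a hyperbolic element $\gamma$ of a lattice $\Gamma'$ commensurable with $\SO(q,\Z)$; such $\Gamma'$ is a classical arithmetic lattice of the first type defined over $K=\Q$. By \Cref{th:ERT 1.1}(1), $\lambda$ is a Salem number with $\deg_{\Q}(\lambda)\leq n+1$. Viewing $\gamma$ as an isometry of a rational quadratic space $(\Q^{n+1},q')$ with $q'$ similar to $q$, its characteristic polynomial factors as $f(x)g(x)$, where $f$ is the Salem minimal polynomial of $\lambda$ and $g$ is a product of cyclotomic factors carrying the eigenvalues on the unit circle not conjugate to $\lambda$. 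Bayer-Fluckiger's necessary condition \cite{Bayer-Fluckinger} for the existence of such an isometry relates the value of the characteristic polynomial at $\pm 1$ to the determinant of $q'$ modulo squares; in the top-degree case $\deg_{\Q}(\lambda)=n+1$ one has that $f$ equals the characteristic polynomial, and the condition reads $f(1)f(-1)\equiv -D$, where $D$ is the square-free part of a fixed sign times $\det q$.

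Next I would verify that $D$ is a genuine invariant of the commensurability class, which is exactly where the hypothesis that $n$ is odd enters. Lattices commensurable with $\SO(q,\Z)$ correspond to forms $q'$ similar to $q$ over $\Q$, i.e.\ $q'\sim c\,q$ for some $c\in\Q^{\times}$; since $\det(c\,q)=c^{\,n+1}\det q$ and $n+1=2m$ is even, $c^{\,n+1}=(c^{m})^{2}$ is a square, so $\det q'\equiv \det q$ in $\Q^{\times}/\Q^{\times 2}$ and $D$ is well-defined. With $D$ fixed, I would sum the bound of \Cref{ThmSalemCount} over the admissible even degrees $2,4,\dots,2m$. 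The contribution of degree $2m$ is the stated main term $\tfrac{\kappa(m,D)}{\pi\sqrt{D}}Q^{m-1}\log Q+O(Q^{m-1})$, while each degree $2m'<2m$ contributes at most $O(Q^{m'-1}\log Q)=O(Q^{m-2}\log Q)=o(Q^{m-1})$, and is absorbed into the error term.

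I expect the main obstacle to be the arithmetic bookkeeping in the middle step: pinning down the precise square class $-D$ produced by Bayer-Fluckiger's condition from the invariants of $q$, and confirming that it is insensitive to the choice of representative $q'$ in the similarity class and to conjugation of $\gamma$. A secondary technical point is the case $\deg_{\Q}(\lambda)<n+1$, where the characteristic polynomial carries extra cyclotomic factors, so that Bayer-Fluckiger constrains $f(1)f(-1)g(1)g(-1)$ rather than $f(1)f(-1)$ alone; this does not affect the final bound because these lower-degree Salem numbers already number $o(Q^{m-1})$, but it must be acknowledged so that the reduction to \Cref{ThmSalemCount} is invoked only in the top degree.
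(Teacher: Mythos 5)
Your proposal is correct and takes essentially the same route as the paper: the degree bound comes from \Cref{th:ERT 1.1}, Bayer-Fluckiger's necessary condition combined with commensurability invariance of the determinant square class (the paper cites Montesinos-Amilibia here; your direct argument via similarity and $n+1$ even is the same point, modulo the unproved fact that commensurable lattices correspond to similar forms) places the top-degree minimal polynomials in the set counted by \Cref{ThmSalemCount}, and the lower degrees are handled exactly as in your ``secondary technical point'': the paper never imposes the square-class condition below the top degree, instead absorbing the sets $\mathcal{S}_i(Q)$, $i<m$, into the error term via the unconditional count of \Cref{thm:GG Salem count}. One quantitative slip: lower-degree Salem numbers are not $o(Q^{m-1})$ --- in degree $2(m-1)$ there are $\sim\omega_{m-1}Q^{m-1}$ of them by \Cref{thm:GG Salem count} --- but they are $O(Q^{m-1})$, which still suffices for absorption into the stated error term, so the conclusion is unaffected.
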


As a consequence, the aforementioned connection between Salem numbers and closed geodesics of arithmetic hyperbolic orbifolds of the first type allow us to prove the following.

\begin{maincor} \label{ThmLengths}
Let $n$ be odd and let $\Gamma$ be a classical arithmetic lattice in $\mathrm{Isom}(\H^n)$ commensurable with some $\Or(q,\Z)$ for an admissible quadratic form $q$ over $\Q$. Suppose $q$ has reduced determinant $-D$. Let $m=\frac{n+1}{2}$. The number of distinct lengths $\leq L$ of hyperbolic isometries which are elements of classical arithmetic lattices in the commensurability class of $\Gamma$ is bounded above by
\[
    \frac{\kappa(m,D)}{\pi\sqrt{D}}e^{(m-1)L}L + O(e^{(m-1)L})
\]
where $\kappa(m,D)$ is a constant depending only on $D$ and $m$.
\end{maincor}

The count above applies to arithmetic lattices of the first type defined over $\Q$ which are \emph{classical}, that is, they are contained in some $\SO(q,\Q)$. In odd dimension, not all arithmetic lattices $\Gamma$ of the first type are classical; however, the subgroup $\Gamma^{(2)}$ generated by squares of elements in $\Gamma$ is always classical (see \cite[Lemma 4.5]{EmeryRatcliffeTschantz}).

\begin{maincor} \label{ThmLengthsnotclassical}
Let $n$ be odd and let $\Gamma$ be an arithmetic lattice in $\mathrm{Isom}(\H^n)$ commensurable with some $\Or(q,\Z)$ for an admissible quadratic form $q$ over $\Q$. Suppose $q$ has reduced determinant $-D$. Let $m=\frac{n+1}{2}$. The number of distinct lengths $\leq L$ of hyperbolic isometries which are elements of classical arithmetic lattices in the commensurability class of $\Gamma$ is bounded above by
\[
    \frac{\kappa(m,D)}{\pi\sqrt{D}}e^{2(m-1)L}L + O(e^{2(m-1)L})
\]
where $\kappa(m,D)$ is a constant depending only on $D$ and $m$.
\end{maincor}

The study of multiplicities in the length spectrum of arithmetic hyperbolic manifolds is an active area of research. The exponential growth of multiplicity averages for surfaces follows from work by Luo and Sarnak \cite{LS94}. A similar result has recently been obtained for semi-arithmetic hyperbolic surfaces \cite{BCDP26}. For $n\geq 3$, this was initiated in \cite{BLMT22} for the even case and in \cite{G24} for the odd case. We apply our methods to improve lower bounds for the strong exponential growth of average of multiplicities in the geodesic length spectrum of non-compact arithmetic orbifolds. 

\begin{mainthm} \label{ThmMeanMult}
Let $X$ be a classical non-compact arithmetic hyperbolic orbifold of odd dimension $n>4$ of the first type. Then, the multiplicity average of the primitive length spectrum of $X$ has growth rate at least 
\[ c\cdot\frac{e^{\frac{(n-1)\ell}{2}}}{\ell^2}, \hspace{3mm}\ell\to\infty \]
where $c$ is a positive constant.
\end{mainthm}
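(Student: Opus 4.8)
The plan is to write the multiplicity average as a ratio and to bound its numerator from below and its denominator from above. Let $\ell_1<\ell_2<\cdots$ be the distinct primitive geodesic lengths of $X$, let $m(\ell_j)$ be the multiplicity of $\ell_j$ (the number of primitive closed geodesics of $X$ of length exactly $\ell_j$), and set
\[
N_X(\ell)=\sum_{\ell_j\le \ell} m(\ell_j), \qquad s_X(\ell)=\#\{j:\ell_j\le \ell\},
\]
so that the multiplicity average up to $\ell$ is $N_X(\ell)/s_X(\ell)$. It therefore suffices to show $N_X(\ell)\gg e^{(n-1)\ell}/\ell$ and $s_X(\ell)\ll e^{(n-1)\ell/2}\,\ell$.

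For the numerator I would invoke the prime geodesic theorem for finite-volume hyperbolic $n$-orbifolds, which gives $N_X(\ell)\sim e^{(n-1)\ell}/((n-1)\ell)$; here $n-1$ is the volume entropy of $\H^n$ and governs the exponential growth of primitive closed geodesics. Only the lower bound $N_X(\ell)\gg e^{(n-1)\ell}/\ell$ is needed, so in the non-compact setting the contribution of the continuous spectrum to the trace formula affects only lower-order terms and can be ignored.

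For the denominator, since $X=\Gamma\bs\H^n$ is classical of the first type and arises from an admissible rational form $q$ of reduced determinant $-D$, the theorem of Emery, Ratcliffe, and Tschantz shows that each primitive length $\ell_j$ produces a Salem number $\lambda_j=e^{\ell_j}$ whose minimal polynomial satisfies $f(1)f(-1)\equiv -D$ in $\Q^\times/\Q^{\times 2}$. Hence the distinct lengths of $X$ form a subset of the lengths realized by classical arithmetic lattices across the commensurability class, and \Cref{ThmLengths} applies with $Q=e^{\ell}$. Using $2m=n+1$, so that $m-1=(n-1)/2$, this yields
\[
s_X(\ell)\le \frac{\kappa(m,D)}{\pi\sqrt{D}}\,e^{(n-1)\ell/2}\,\ell + O\!\left(e^{(n-1)\ell/2}\right).
\]

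Dividing the two bounds gives
\[
\frac{N_X(\ell)}{s_X(\ell)}\gg \frac{e^{(n-1)\ell}/\ell}{e^{(n-1)\ell/2}\,\ell}=\frac{e^{(n-1)\ell/2}}{\ell^2},
\]
which is exactly the asserted growth rate, with $c$ absorbing the implied constants. I expect the main obstacle to lie not in the elementary division but in making the denominator bound genuinely applicable: one must check that every primitive length of the specific orbifold $X$ obeys the same fixed congruence $f(1)f(-1)\equiv -D$, so that a single constant $\kappa(m,D)$ controls $s_X(\ell)$, and one must track the primitive-versus-imprimitive bookkeeping in passing from closed geodesics of $X$ to Salem numbers of the commensurability class. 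The standing hypotheses---non-compact, classical, first type, and $n>4$ odd---are what let the prime geodesic asymptotic and the arithmetic input of \Cref{ThmLengths} hold simultaneously.
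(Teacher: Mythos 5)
Your proposal is correct and follows essentially the same route as the paper: a lower bound on the geodesic count from the prime geodesic theorem of Gangolli--Warner for non-compact lattices, an upper bound on the number of distinct lengths from \Cref{ThmLengths} with $Q=e^{\ell}$, and division of the two bounds. The worries you flag at the end (uniformity of the congruence class and primitive bookkeeping) are already handled by the formulation of \Cref{ThmLengths}, which bounds all lengths of hyperbolic elements across the entire commensurability class and hence dominates the count of distinct primitive lengths of $X$ itself.
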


This article is organized as follows. We give the necessary background on quadratic forms, hyperbolic space, arithmetic lattices, and Salem numbers in \Cref{s:background}. In \Cref{s:Diophantine} we prove the following statement which we were unable to find in the literature, and which may be of independent interest. While more general results exist (cf. \cite{efthymios}), we include an elementary proof that is in the spirit of counting Pythagorean triples.

\newtheorem*{laterthm}{\Cref{th:diop. sol.}}
\begin{laterthm}
    Let $D$ be a square-free positive integer and $t$ be the number of distinct prime factors of $D$. Denote the number of integer solutions to $A^2+DB^2=C^2$ with $|C|\leq X$ by
    \[ P_D(X)=\#\{(A,B,C)\in \Z^{3}\mid A^2+DB^2=C^2, \mbox{  and  } |C|\leq X\}.
    \]
    Then
    \[ P_D(X) = \begin{cases}
    2^{t}\frac{8X}{\pi\sqrt{D}} + O(\sqrt{X}\log X)   &\text{ if $D$ is odd}\\
    2^{t}\frac{6X}{\pi\sqrt{D}} + O(\sqrt{X}\log X)   &\text{ if $D$ is even} .
    \end{cases} \]
\end{laterthm}

In \Cref{s:Salem count} we prove \Cref{ThmSalemCount} and give explicit constants $\kappa(D,m)$ in \Cref{th:Count with constants}. We then apply \Cref{ThmSalemCount} to prove \Cref{ThmSalemProportion} and deduce \Cref{ThmLengths} in \Cref{s:count lengths}. Finally, in \Cref{s:mean mult}, we prove \Cref{ThmMeanMult}.

\section*{Acknowledgements}
This paper arose from the Number Theory in the Americas II workshop. We would like to thank the organizers and the funding agencies that supported the workshop (Banff International Research Station, Casa Matem\'atica Oaxaca, IMU-CDC, JINVARIANT, Clay Mathematics Institute, Journal of Number Theory) for providing us with the opportunity to work together. We would also like to thank Kiran Kedlaya,
Moubariz Garaev, Harald Helfgott, Frits Beukers, and Jan-Hendrik Evertse for helpful discussions on some of the background material related to this paper. We are extremely grateful to Frits Beukers for suggesting a superior approach to the Diophantine counting problem that ultimately led to \Cref{th:diop. sol.}. 

The first author was partially supported by NSF grants DMS-2243188 and DMS-2441034. The second author was partially supported by Universal CNPq Grant 408834/2023-4. The third author was supported by the SECIHTI Grant 169113, \emph{Estancias Posdoctorales por México 2023-Modalidad Académica}, at CIMAT. The fourth author was partially supported by the Max Planck Institute for Mathematics and the Centre de Recherches Mathématiques during her sabbatical.

\section{Background}\label{s:background}

\subsection{Quadratic forms}
Let $K$ be a field. A \emph{quadratic form} is a homogeneous polynomial in $n$ variables of degree 2 with coefficients in $K$. It can be written as 
\[ q(x)=\sum_{i=1}^n \sum_{j=i}^n a_{ij}x_i x_j = x^t S_q x \] where $S_q$ is the symmetric matrix with diagonal entries $(S_q)_{ii}=a_{ii}$ and off-diagonal entries $(S_q)_{ij}=\frac{1}{2}a_{ij}$. The \emph{rank} of $q$ is the number of variables $n$. The \emph{determinant} of $q$ is $\det(q)=\det(S_q)$. We will assume that $q$ is nondegenerate, that is, $S_q$ is invertible.

We say that a quadratic form $q$ represents $a\in K$ if there is a vector $v\in K^n$ satisfying $q(v)=a$. A quadratic form is \emph{isotropic} if it represents 0 by a nonzero vector and it is \emph{anisotropic} otherwise.

Two quadratic forms $q$ and $q'$ over $K$ are said to be $K$-equivalent if there exists a matrix $M\in\GL(n,K)$ such that $M^t S_q M= S_{q'}$. Any quadratic form over $K$ is $K$-equivalent to a diagonal quadratic form $q'$, which means $q'(x)=\sum_{i=1}^n a_ix_i^2$. We write $q'=\langle a_1,\dots,a_n \rangle$.

Let $V=K^n$. Then the \emph{quadratic space} $(V,q)$ has isometry group given by the orthogonal group 
\[ \Or(q)=\Or(q,K)=\{ A \in \GL(n,K) \:|\: A^tS_qA=S_q \}. \]

We will focus on quadratic forms $q$ defined over $K=\Q$.
The signature of $q$ is the pair $(n_p, n_n)$ where $n_p$ is the number of positive eigenvalues of $S_q$ and $n_n$ is the number of negative eigenvalues. 
Since $q$ is defined over $\Q$, we may scale $q$ by a positive integer so that the coefficients and the determinant lie in $\Z$. Note that scaling does not change the rank, the signature, nor the isometry group.
Considering $(-1)$ as a ``prime'' for our purposes, let  $\det(q)=p_1^{\epsilon_1}\cdots p_k^{\epsilon_k}$ be the prime decomposition for $\det(q)$. The \emph{reduced determinant} of $q$ is the square-free integer $\det_r(q)=p_1^{\epsilon'_1}\cdots p_k^{\epsilon'_k}$ with $\epsilon'_i=0$ when $\epsilon_i$ is even and $\epsilon'_i=1$ when $\epsilon_i$ is odd.

\subsection{Hyperbolic space} \label{ss:hyperbolic space}
The hyperbolic $n$-space $\H^n$ is the simply connected Riemannian $n$-manifold in which the sectional curvature is constant $-1$. We will consider the hyperboloid model of hyperbolic space defined as follows.

Let $n \geq 2$ and $q=\langle 1,\dots , 1,-1\rangle$ be the standard quadratic form with signature $(n,1)$. The bilinear form $B$ in $\R^{n+1}$ associated with the quadratic form $q$ is given by
\[ {\displaystyle B(x,y) = \frac{q(x+y)-q(x)-q(y)}{2}}. \]
The set 
\[ \mathcal{S}=\{(x_1,\ldots,x_{n+1}) \in \R^{n+1}; q(x_1,\ldots,x_{n+1})=-1 \}\]
forms a two-sheeted hyperboloid. We denote the positive sheet by 
\[ \mathcal{S}^+=\{(x_1,\ldots,x_{n+1}) \in \mathcal{S}; x_{n+1}>0 \}.\] 
The \textit{hyperboloid model} of hyperbolic $n$-space is obtained by identifying $\H^n$ with the $n$-manifold $\mathcal{S}^+$ together with the Riemannian metric given by $d(x,y)=\arccos\left(B(x,y)\right)$. 

The orthogonal group $\Or(q,\R)$ preserves the hyperboloid $\mathcal{S}$. We identify the isometry group 
\[ \mathrm{Isom}(\H^n)=\mathrm{PO}(q,\R)=\mathrm{PO}(n,1)\cong\SO(q,\R)=\SO(n,1)\]
with the isometry group of $\H^n$ in this model. It has two connected components. The identity component consists of orientation-preserving isometries and the other component consists of orientation-reversing isometries. 
We remark that one may construct isometric hyperboloid models for $\H^n$ from any quadratic form $q$ with signature $(n,1)$ defined over a real field.

An element $\gamma\in \mathrm{Isom}(\H^n)$ is called \emph{hyperbolic} if there is a unique geodesic $L\in\H^n$, called the \emph{axis} of $\gamma$, on which $\gamma$ acts as a translation by a positive distance $\ell(\gamma)$, called the \emph{translation length} of $\gamma$. In this case, $\gamma$ has exactly $n-1$ eigenvalues of norm one and two real eigenvalues not on the unit circle, $\lambda=e^{\ell(\gamma)}>1$ and $\lambda^{-1}=e^{-\ell(\gamma)}<1$ (see \cite{ChenGreenberg}). 

A \emph{lattice} is a discrete subgroup $\Gamma<\mathrm{Isom}(\H^n)$ such that the quotient $\H^n/\Gamma$ has finite volume. If $\H^n/\Gamma$ is compact, we say that $\Gamma$ is \textit{cocompact}. Whenever $\Gamma$ is a lattice, the quotient $\H^n/\Gamma$ is a complete finite-volume hyperbolic orbifold. In this case, most elements of $\Gamma$ are hyperbolic and the axes of hyperbolic elements map to closed geodesics in the quotient. The quotient $\H^n/\Gamma$ will be a manifold whenever $\Gamma$ is torsion free. 

Two lattices $\Gamma_1$ and $\Gamma_2$ in $\mathrm{Isom}(\H^n)$ are \emph{commensurable} if there exists $g\in\mathrm{Isom}(\H^n)$ such that $\Gamma_1\cap g\Gamma_2g^{-1}$ has finite index in both $\Gamma_1$ and $g\Gamma_2g^{-1}$. 

\subsection{Arithmetic lattices}\label{ss:arithmetic lattices}
Let $K$ be a totally real number field with ring of integers $\mathcal{O}_K$. Consider a quadratic form $q:\R^{n+1} \rightarrow \R$ with signature $(n,1)$ defined over $K$ with respect to the identity embedding $K\hookrightarrow \R$. We say that such a form $q$ is \emph{admissible} if $q^\sigma$ has signature $(n,0)$ with respect to any other non-identity embedding $\sigma:K\hookrightarrow \R$ (in other words, $q^\sigma$ is positive definite). This condition on the Galois conjugates of $q$ guarantees that the group
\[ \SO(q,\mathcal{O}_K) = \{ A \in \GL(n+1,\mathcal{O}_K); A^tS_qA=S_q\}\]
is a discrete subgroup of $\SO(q,\R)$. Furthermore, $\SO(q,\mathcal{O}_K)$ is a lattice in $\mathrm{Isom}(\H^n)$ in the sense that it is a discrete subgroup of finite co-volume; that is, the quotient $\H^n/\SO(q,\mathcal{O}_K)$ has finite volume with respect to the hyperbolic metric.
Any lattice $\Gamma<\mathrm{Isom}(\H^n)$ commensurable with some $\SO(q,\mathcal{O}_K)$ for an admissible $q$ defined over a totally real number field $k$ is called \emph{arithmetic of the first type} (also known as \emph{arithmetic of simplest type}). The quotient orbifold $\H^n/\Gamma$ is also called \emph{arithmetic of the first type}.

An arithmetic lattice of the first type $\Gamma$ is called \emph{classical} if it is contained in $\SO(q,K)$ for an admissible $q$ defined over a totally real number field $k$. It turns out that every arithmetic lattice in $\mathrm{Isom}(\H^n)$ for $n$ even is classical, and necessarily of the first type (see, e.g., \cite[Lemma 4.2]{EmeryRatcliffeTschantz}). When $n$ is odd, there are arithmetic lattices which are not of the first type; however, if $\Gamma$ is arithmetic of the first type, then $\Gamma^{(2)}$, the subgroup generated by the squares of the elements in $\Gamma$, is classical (see \cite[Lemma 4.5]{EmeryRatcliffeTschantz}).

We focus on arithmetic lattices of the first type defined over $\Q$. All non-cocompact arithmetic lattices are of the first type and defined over $\Q$. The quotient hyperbolic orbifold $\H^n/\SO(q,\mathcal{O}_K)$ is compact whenever $q$ is anisotropic, and cusped whenever $q$ is isotropic. In dimension $2$, there is a unique commensurability class of non-cocompact arithmetic lattices which contains, via the isomorphism $\mathrm{Isom}^+(\H^2)\cong\PSL(2,\R)$, the modular group $\PSL(2,\Z)$. All other arithmetic lattices of the first type defined over $\Q$ in dimension $2$ are cocompact.
In dimension $3$, there are infinitely many commensurability classes of non-cocompact arithmetic lattices each of which contains, via the isomorphism $\mathrm{Isom}^+(\H^3)\cong\PSL(2,\C)$, a Bianchi group $\PSL(2,R_D)$ where $R_D$ is the ring of integers in $\Q(\sqrt{-D})$. There are also infinitely many commensurability classes of cocompact arithmetic lattices in dimension $3$ which are of the first type and defined over $\Q$.
Whenever $n\geq 4$, it follows from Meyer's theorem that the quadratic forms over $\Q$ with signature $(n,1)$ are always isotropic. Thus in dimensions at least $4$, arithmetic lattices of the first type defined over $\Q$ are exactly those which are non-cocompact.

\subsection{Salem numbers and lengths of geodesics}
For any non-constant polynomial
\[ p(x)=a_dx^d+a_{d-1}x^{d-1}+\dots + a_1x+a_0= a_{d} \prod_{i=1}^d (x-\alpha_i ),\]
we define the \textit{Mahler measure of $p(x)$} as
\[ M(p) = |a_{d}| \prod_{i=1}^d \mathrm{max}(|\alpha_i|,1).\]
One can show via Jensen's formula that this is equivalent to
\[
M(p)=\exp\left( \int_0^1 \log |p(e^{2\pi i t})|\; dt \right),
\]
 which can be viewed as the geometric mean of $|p(z)|$ over all values of $z$ lying on the unit circle. One can immediately observe that $M(p) = 1$ precisely when $p(x) = x^k \displaystyle\prod_i \Phi_{n_i}(x)$, where $\Phi_n$ denotes the $n$-th cyclotomic polynomial. There is a lower bound on the Mahler measure in terms of the coefficients (see \cite[\S 2]{Mahler})
 \begin{equation}\label{eq:coeffbound}
     |a_k|\leq \binom{d}{k}M(p) . 
 \end{equation} 

In 1933, Lehmer asked whether the Mahler measure reaches a minimal value when considered over all minimal polynomials of algebraic integers that are not roots of unity. This became known as Lehmer's conjecture, which can be stated as follows:

\begin{conjecture}[Lehmer's conjecture]
There exists a real number $m > 1$ such that $M(p) \geq m$ for all non-cyclotomic polynomials $p$. 
\end{conjecture}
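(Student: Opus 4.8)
This statement is Lehmer's conjecture, which is among the most celebrated open problems on heights of algebraic numbers; no unconditional proof is known. What follows is therefore not a complete proof but an outline of the strategy that has produced the strongest partial results, together with the obstruction that has so far prevented it from resolving the conjecture.

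I would first carry out the standard reductions that isolate the essential case. Since $M(p)\geq|a_d|$, any integer polynomial with $|a_d|\geq 2$ satisfies $M(p)\geq 2$, so one may assume $p$ is monic and reduce to minimal polynomials of algebraic integers that are not roots of unity. Next, by Smyth's theorem, any non-reciprocal such polynomial has Mahler measure at least the smallest Pisot number $\theta_0\approx 1.3247$, the real root of $x^3-x-1$. Thus the conjecture is already known away from the reciprocal case, and a short argument shows that any sequence of minimal polynomials with $M(p)\to 1$ must eventually consist of Salem polynomials. In other words, the infimum of Mahler measures over all non-cyclotomic algebraic integers is attained, if at all, on Salem numbers, precisely the objects counted in this paper, the smallest known value being Lehmer's number $\approx 1.17628$.

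The core step is to bound $M(p)$ below in terms of the degree $d$ via Dobrowolski's auxiliary-polynomial method. Writing $\alpha_1,\dots,\alpha_d$ for the conjugates, one fixes an auxiliary prime $\ell$ and studies the resultant-type integer $R=\prod_{i,j}(\alpha_i^{\ell}-\alpha_j)$. When $\alpha$ is not a root of unity this integer is nonzero, and a congruence argument modulo $\ell$ (comparing the $\ell$-power Frobenius with the identity) forces $\ell^{d}\mid R$, so $|R|\geq \ell^{d}$. On the other hand, since the conjugates lie on or near the unit circle, $|R|$ is bounded above by an explicit expression in $M(p)$ and $d$. Comparing the two bounds and optimizing over $\ell$ yields the unconditional lower bound
\[
M(p)\geq 1+c\left(\frac{\log\log d}{\log d}\right)^{3}
\]
for an absolute constant $c>0$.

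The hard part, and the reason the conjecture is still open, is that this bound tends to $1$ as $d\to\infty$, so it does not furnish a single $m>1$ valid for all degrees. The geometric source of the difficulty is that a hypothetical small Salem number of large degree has almost all of its conjugates clustered extremely close to the unit circle, so the integrality constraints on $R$ become increasingly weak as $d$ grows; no refinement of the auxiliary-polynomial, resultant, or equidistribution estimates is known to remove the $\log\log d/\log d$ decay. Producing a degree-uniform lower bound is exactly the content of Lehmer's conjecture, and I would not expect the present circle of ideas to close the gap without an essentially new ingredient.
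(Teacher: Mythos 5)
You are right not to offer a proof: the statement is Lehmer's conjecture, which the paper itself records as an open conjecture with no proof, so there is nothing in the paper for your argument to diverge from. Your assessment of the state of the art --- Smyth's theorem handling the non-reciprocal case, the reduction of the remaining difficulty to Salem numbers, and Dobrowolski's degree-dependent bound $1+c\left(\frac{\log\log d}{\log d}\right)^{3}$ falling short of a uniform constant --- is accurate and correctly identifies why the problem remains open.
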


The smallest Mahler measure (greater than $1$) that is currently known is attained by Lehmer's polynomial,
\[ L(x)=x^{10}+x^9-x^7-x^6-x^5-x^4-x^3+x+1,\]
which has Mahler measure $M(L) \approx 1.176280818$. By plotting the roots of Lehmer's polynomial, one can observe that the real roots are $\lambda = 1.176280818....$ and $\lambda^{-1} = 0.850137...$. There are also eight complex roots of $L(x)$ that lie along the unit circle. This motivates a definition: 

\begin{definition} A \emph{Salem number} is a real algebraic integer $\lambda > 1$ such that all of its Galois conjugates except $\lambda^{-1}$ have norm $1$. A polynomial $f\in\Z[x]$ is a \emph{Salem polynomial} if it is the minimal polynomial of a Salem number.
\end{definition}

In 2019, Emery, Ratcliffe and Tschantz established a more direct relationship between Salem numbers and lengths of geodesics in hyperbolic orbifolds which are arithmetic of the first type. Whenever $\gamma$ is a hyperbolic element in a classical arithmetic lattice in $\mathrm{Isom}(\H^{n})$, $\lambda=e^{\ell(\gamma)}$ is a Salem number (see \Cref{th:ERT 1.1}). Conversely, we have:

\begin{theorem}\cite[Theorem 6.3]{EmeryRatcliffeTschantz}
Let $\lambda$ be a Salem number, let K be a subfield of $\Q(\lambda+\lambda^{-1})$, and let $n\geq 2$ such that $\deg_K(\lambda)\leq n+1$. Then there exist a classical arithmetic lattice $\Gamma\subset\Isom(\H^n)$ of the first type over $K$ and an orientation preserving hyperbolic element $\gamma\in\Gamma$ such that $\lambda=e^{\ell(\gamma)}$.
\end{theorem}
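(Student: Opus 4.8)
The plan is to run a Gross--McMullen-style construction, realizing $\lambda$ as the spectral radius of multiplication by $\lambda$ on the field $K(\lambda)$ equipped with a suitable twisted trace form, in the spirit of Bayer-Fluckiger \cite{Bayer-Fluckinger}. First I would record the field theory. Since $K\subseteq\Q(\lambda+\lambda^{-1})$ and the latter is totally real, $K$ is totally real; writing $\mu=\lambda+\lambda^{-1}$ we have $K(\mu)=\Q(\mu)$, which is totally real of degree $m$ over $\Q$, where $2m=\deg_\Q(\lambda)$. Because $\lambda$ is not totally real while $\Q(\mu)$ is, the polynomial $x^2-\mu x+1$ is irreducible over $\Q(\mu)$, so $d:=\deg_K(\lambda)=2d_0$ is even, with $d_0=[\Q(\mu):K]$, and the minimal polynomial $g\in K[x]$ of $\lambda$ is reciprocal of degree $d$, its roots pairing as $\{\alpha,\alpha^{-1}\}$ according to the $K$-conjugates of $\mu$. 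The key observation, which I would isolate as a lemma, is that the unique $\Q(\mu)$-conjugate of $\mu$ exceeding $2$ is $\mu$ itself; hence at the identity real place of $K$ the polynomial $g$ has the two real roots $\lambda,\lambda^{-1}$ together with $d_0-1$ conjugate pairs on the unit circle, while at every non-identity real place of $K$ all roots of $g$ lie on the unit circle.

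Next I would build the form. Let $\sigma$ be the $K$-involution of $K(\lambda)$ sending $\lambda\mapsto\lambda^{-1}$, whose fixed field is $\Q(\mu)$, and for $\phi\in\Q(\mu)^\times$ set
\[
q_\phi(a,b)=\mathrm{Tr}_{K(\lambda)/K}\bigl(\phi\, a\,\sigma(b)\bigr).
\]
A short computation shows $q_\phi$ is symmetric (using that $\sigma$ fixes $\phi$ and commutes with the trace), nondegenerate for $\phi\neq 0$, and that multiplication by $\lambda$, call it $T$, is an isometry, since $q_\phi(\lambda a,\lambda b)=\mathrm{Tr}(\phi\,\lambda a\,\lambda^{-1}\sigma(b))=q_\phi(a,b)$. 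The characteristic polynomial of $T$ on the $d$-dimensional $K$-space $K(\lambda)$ is $g$, so its eigenvalues are exactly the roots of $g$; as these pair into $\{\alpha,\alpha^{-1}\}$, we get $\det T=1$.

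The technical heart is controlling the signature of $q_\phi$ at every real place of $K$ simultaneously. Decomposing $K(\lambda)\otimes_{K,\tau}\R$ into its $\R$- and $\C$-factors indexed by the roots of the $\tau$-conjugate of $g$, I would show that the pair of real factors corresponding to $\lambda,\lambda^{-1}$ (present only at the identity place) is interchanged by $\sigma$ and contributes a hyperbolic plane of signature $(1,1)$, whereas each $\C$-factor, on which $\sigma$ acts as complex conjugation, contributes $(2,0)$ or $(0,2)$ according to the sign of the corresponding real conjugate of $\phi$. Thus $q_\phi$ has signature $(d-1,1)$ at the identity place and is positive definite at every other real place precisely when $\phi$ is positive at the $m-1$ real embeddings of $\Q(\mu)$ other than the identity one. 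Such $\phi\in\Q(\mu)^\times$ exists because prescribed sign patterns at real places are always realizable by weak approximation. This produces an admissible $K$-form $q_\phi$ of signature $(d-1,1)$ together with a hyperbolic isometry $T$ whose attracting eigenvalue is $\lambda$.

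Finally I would pad and pass to an integral model. Choosing any positive definite form $q_1$ over $K$ of rank $n+1-d\geq 0$ (possible since $\deg_K(\lambda)\leq n+1$), I set $q=q_\phi\perp q_1$ and $\gamma=T\oplus\Id$; then $q$ is admissible of signature $(n,1)$ and $\gamma$ is hyperbolic with $n-1$ eigenvalues of modulus one and attracting eigenvalue $\lambda$, so $e^{\ell(\gamma)}=\lambda$. Since $\lambda$ is an algebraic integer, multiplication by $\lambda$ preserves the $\mathcal{O}_K$-lattice $\mathcal{O}_{K(\lambda)}\oplus\mathcal{O}_K^{\,n+1-d}$, and the integral isometry group of this lattice is a classical arithmetic lattice of the first type over $K$ containing $\gamma$. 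To land in $\mathrm{Isom}^+(\H^n)$ I would note that $\gamma$ preserves the positive sheet, as its eigenvalues on the hyperbolic plane are the positive numbers $\lambda,\lambda^{-1}$, and has determinant one, so it is orientation-preserving. The main obstacle throughout is the simultaneous signature bookkeeping of the twisted trace form at all archimedean places of $K$; once the sign condition on $\phi$ is identified and met, the remaining steps are formal.
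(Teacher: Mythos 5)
This theorem is quoted by the paper from \cite{EmeryRatcliffeTschantz} without proof, so the only meaningful comparison is with the proof in that source. Your proposal is correct and follows essentially the same route as that proof: realize $\lambda$ as an isometry of a twisted trace form $\mathrm{Tr}_{K(\lambda)/K}\bigl(\phi\, x\,\sigma(y)\bigr)$ on $K(\lambda)=\Q(\lambda)$, choose $\phi$ by weak approximation so that the form has signature $(d-1,1)$ at the identity place of $K$ and is positive definite at every other real place, pad to rank $n+1$, and take the stabilizer of an $\mathcal{O}_K$-lattice; the one slip worth fixing is that the padding form $q_1$ must be \emph{totally} positive definite (e.g.\ $\langle 1,\dots,1\rangle$), since a form positive definite only at the identity embedding of $K$ would destroy admissibility.
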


Due to this connection, it is possible to reformulate Lehmer's conjecture in terms of geodesic lengths. 

\begin{conjecture}[Short Geodesic Conjecture]
There exists a positive universal lower bound on the length of geodesics in an arithmetic hyperbolic orbifold of the first type.
\end{conjecture}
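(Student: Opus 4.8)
The plan is to translate the geometric statement into one purely about Salem numbers via the correspondence of \Cref{th:ERT 1.1} together with \cite[Theorem 6.3]{EmeryRatcliffeTschantz}, and then to identify the resulting statement with Lehmer's conjecture. First I would observe that if $\gamma$ is a hyperbolic element of a classical arithmetic lattice of the first type and $\lambda = e^{\ell(\gamma)}$ is the associated Salem number, then the minimal polynomial $f$ of $\lambda$ is monic and its only root of modulus greater than $1$ is $\lambda$ itself; hence $M(f) = \lambda$ and $\ell(\gamma) = \log\lambda = \log M(f)$. A positive universal lower bound $\ell(\gamma) \geq \ell_0 > 0$ on translation lengths is therefore \emph{equivalent} to the inequality $M(f) \geq e^{\ell_0} > 1$ holding uniformly over all Salem polynomials $f$ that arise in this way.

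Next I would argue that the relevant family of Salem polynomials is in fact all of them. By \cite[Theorem 6.3]{EmeryRatcliffeTschantz}, every Salem number $\lambda$ is realized as $e^{\ell(\gamma)}$ for some orientation-preserving hyperbolic $\gamma$ in a classical arithmetic lattice of the first type (take $K = \Q(\lambda+\lambda^{-1})$ and $n$ large enough that the degree bound of \Cref{th:ERT 1.1} holds), while \Cref{th:ERT 1.1} supplies the converse. To cover first-type lattices that are not classical, which occur only in odd dimensions, I would pass to the square $\gamma^2 \in \Gamma^{(2)}$, which lies in a classical lattice and satisfies $\ell(\gamma^2) = 2\ell(\gamma)$; a uniform lower bound for classical lattices then forces $\ell(\gamma) \geq \ell_0/2$, preserving a positive gap. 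Consequently the Short Geodesic Conjecture is logically equivalent to the assertion that the set of all Salem numbers is bounded away from $1$.

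The remaining step is Lehmer's problem for Salem numbers: to prove $\inf\{\lambda : \lambda \text{ Salem}\} > 1$. Here one would appeal to the classical reduction, in the circle of ideas of Salem and Boyd, that the infimum of Mahler measures over all non-cyclotomic algebraic integers agrees with the infimum taken over Salem numbers alone, so that the geometric conjecture is not merely implied by but equivalent to the full Lehmer conjecture. The hard part is accordingly fatal to any unconditional argument: this final step is Lehmer's conjecture itself, which remains open. What the plan genuinely delivers is therefore a clean equivalence rather than a proof — it converts the question of a universal geometric gap into the Mahler-measure gap of Lehmer's problem, and it is precisely because this gap lies out of reach that the quantitative results of \Cref{ThmSalemCount} and its corollaries \emph{count} the short lengths that occur instead of bounding them below.
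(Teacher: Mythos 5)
The statement you were asked about is a conjecture: the paper states it without proof, because no proof exists --- it is an open problem. Your proposal correctly recognizes this and declines to claim a proof, which is the right call, and there is no proof in the paper to compare against. Your reduction of the geometric statement to a statement about Salem numbers is essentially the standard reasoning and matches the paper's framing: by \Cref{th:ERT 1.1} and \cite[Theorem 6.3]{EmeryRatcliffeTschantz}, the translation lengths of hyperbolic elements of classical arithmetic lattices of the first type are exactly the numbers $\log\lambda$ with $\lambda$ a Salem number, and passing to $\Gamma^{(2)}$ handles the non-classical first-type lattices in odd dimensions at the cost of a factor of $2$ in the length gap. So the Short Geodesic Conjecture is indeed equivalent to the assertion that Salem numbers are bounded away from $1$.

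There is, however, a genuine error in your final step. You invoke a ``classical reduction, in the circle of ideas of Salem and Boyd,'' to the effect that the infimum of Mahler measures over all non-cyclotomic algebraic integers agrees with the infimum over Salem numbers alone, and you conclude that the Short Geodesic Conjecture is \emph{equivalent} to the full Lehmer conjecture. No such unconditional reduction exists. What is known is Smyth's theorem, which reduces Lehmer's problem to \emph{reciprocal} polynomials; but a reciprocal non-cyclotomic polynomial may have several roots off the unit circle and need not be a Salem polynomial, so the further reduction to Salem numbers is itself an open conjecture (due to Boyd). The correct logical picture is: Lehmer's conjecture implies the Salem conjecture (trivially, since the Mahler measure of a Salem polynomial is the Salem number itself), hence implies the Short Geodesic Conjecture, while the converse implication is open. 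Your plan therefore delivers an equivalence with the Salem conjecture, not with Lehmer's conjecture; note that the paper's own sentence that one may ``reformulate Lehmer's conjecture in terms of geodesic lengths'' is a loose paraphrase of this same weaker equivalence.
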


We say that a Salem number $\lambda$ is \emph{realized} in $\Gamma<\mathrm{Isom}(\H^{n})$ if there exists a hyperbolic element $\gamma\in \Gamma$ such that $\lambda=e^{\ell(\gamma)}$, alternatively if $\lambda$ is the real eigenvalue of $\gamma$ satisfying $\lambda>1$.

\section{Counting Diophantine solutions} \label{s:Diophantine}

Let $m$ be a positive integer and $D$ a square-free positive integer. We are interested in counting the number of Salem polynomials $f$ of degree $2m$ for which $f(1)f(-1)\equiv -D$ in $\Q^{\times}/\Q^{\times 2}$ with Mahler measure $\leq Q$. Any such $f$ produces a solution $(A,B,C)\in\mathbb{Z}^3$ of the degree $2$ Diophantine equation 
\begin{equation}\label{eq:diophantine eq}
 A^2+DB^2=C^2  
\end{equation}
for the following reasons. Since $D$ is square-free, we can assume that $f(1)f(-1)=-Dk^2$ with $k\in\Z$. Now, if $f(x)=\sum^{2m}_{k=0} c_k x^k \in\Z[x] \text{ with }c_k=c_{2m-k} \text{ and } c_0=c_{2m}=1$ take 

\[A=\sum_{k\\ \text{ even }} c_k,\hspace{1cm} C=\sum_{k \text{ odd}} c_k,\hspace{1cm} B=k.\]

The goal of this section is to prove the following.

\begin{theorem}\label{th:diop. sol.}
    Let $D$ be a square-free positive integer and $t$ be the number of distinct prime factors of $D$. Denote the number of integer solutions to $A^2+DB^2=C^2$ with $\gcd(A,C)=1$ and $|C|\leq X$ by
    \[ P_D(X)=\#\{(A,B,C)\in \Z^{3}\mid A^2+DB^2=C^2, \, \gcd(A,C)=1,  \text{  and  } |C|\leq X\}.
    \]
    Then
    \[ P_D(X) = \begin{cases}
    2^{t}\frac{8X}{\pi\sqrt{D}} + O(\sqrt{X}\log X)   &\text{ if $D$ is odd},\\
    2^{t}\frac{6X}{\pi\sqrt{D}} + O(\sqrt{X}\log X)   &\text{ if $D$ is even} .
    \end{cases} \]
\end{theorem}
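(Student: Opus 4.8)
The plan is to exploit the fact that the conic $A^2+DB^2=C^2$ carries the rational point $(1:0:1)$, so that, exactly as for Pythagorean triples, all of its rational points are cut out by the lines through this point. Parametrizing those lines by their slope $p/q$ gives
\[
(A:B:C)=(q^2-Dp^2:2pq:q^2+Dp^2),
\]
and every solution with $C>0$ coming from a coprime pair $(p,q)$ is, up to the scaling factor $g(p,q)=\gcd(q^2-Dp^2,\,2pq,\,q^2+Dp^2)$, a multiple of this triple; the sign changes $A\mapsto -A$, $B\mapsto -B$ and the reflection $C\mapsto -C$ contribute a bounded multiplicative constant. I would therefore first translate the count $P_D(X)$ into a count of coprime integer pairs $(p,q)$ subject to $(q^2+Dp^2)/g(p,q)\leq X$, i.e.\ lattice points lying in an ellipse $q^2+Dp^2\leq g(p,q)\,X$ whose scale is governed by the local value of $g$.

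Second, I would compute $g(p,q)$ by a purely local analysis. For a prime $\ell\nmid 2D$, coprimality of $(p,q)$ forces $\ell\nmid g$, so the only primes that can divide $g$ are $2$ and the primes $\ell\mid D$, and for each such $\ell$ the exponent $v_\ell(g)$ is determined by the residues of $p$ and $q$ modulo a small power of $\ell$. Concretely, a prime $\ell\mid D$ divides $g$ exactly when $\ell\mid q$, and it then divides $g$ exactly once; this enlarges the admissible ellipse by a factor $\ell$ on the corresponding residue classes, and summing these contributions over the $2^{t}$ subsets of prime divisors of $D$ is what produces the factor $2^{t}$. The behaviour at $\ell=2$ is what separates the odd and even cases and is the source of the constants $8$ and $6$: for odd $D$ a positive-density family of pairs $(p,q)$ has $g$ even, which effectively doubles the ellipse on those classes and inflates the leading constant beyond the naive coprimality density.

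Third, with the admissible residue classes and the scale of the ellipse pinned down, I would invoke Gauss's lattice-point count: the number of integer points in $q^2+Dp^2\leq T$ equals the area $\tfrac{\pi T}{\sqrt D}$ with boundary error $O(\sqrt T)$. Restricting to a fixed union of residue classes multiplies the area by the corresponding local density, and imposing coprimality through Möbius inversion contributes the factor $1/\zeta(2)=6/\pi^2$ while inflating the error to $O(\sqrt X\log X)$. Assembling the area, the local densities at $2$ and at the primes dividing $D$, and the sign factors then yields the stated main terms $2^{t}\tfrac{8X}{\pi\sqrt D}$ for odd $D$ and $2^{t}\tfrac{6X}{\pi\sqrt D}$ for even $D$, with error $O(\sqrt X\log X)$.

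The main obstacle I anticipate is the second step: carrying out the $\ell$-adic bookkeeping of $g(p,q)$ precisely enough that the local densities multiply to \emph{exactly} $8$ (resp.\ $6$) and \emph{exactly} $2^{t}$, rather than merely up to a constant, and simultaneously organizing the Möbius inversion so that its tail, together with the Gauss boundary terms on each scaled ellipse, stays within $O(\sqrt X\log X)$. Keeping the sign and parity conventions consistent across the $\ell=2$ computation and the coprimality sieve is the delicate part; by contrast, the Gauss estimate itself is routine once the region and the congruence conditions have been fixed.
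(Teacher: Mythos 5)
Your route is essentially the paper's: the chord parametrization of the conic through $(1,0)$, a stratification of coprime parameter pairs according to which primes of $D$ divide $q$ (equivalently, over the factorizations $D=D_1D_2$), a separate parity analysis at $2$ that is supposed to produce the $8$ versus $6$, and Gauss's lattice-point count combined with M\"obius inversion for coprimality (the paper's two lattice-point lemmas, quoting Hardy and Wright). Like the paper, you also tacitly count \emph{primitive} solutions rather than all integer triples --- the dilates $d\cdot(A_0,B_0,C_0)$ of the primitive solutions would by themselves contribute $\asymp X\log X$ triples --- so that reading of $P_D(X)$ is not where I take issue.

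The genuine gap is your second step, which you defer as ``$\ell$-adic bookkeeping'': it carries the entire arithmetic content of the theorem, and as you have set it up it does \emph{not} produce the factor $2^t$. Your normalization is a bijection: a coprime pair $(p,q)$ yields the primitive solution $\frac{1}{g}(q^2-Dp^2,\,2pq,\,q^2+Dp^2)$, and you stratify by the exact value of $g$. The stratum whose odd part is $D_1=\prod_{\ell\in S}\ell$ consists of pairs with $\ell\mid q$ for $\ell\in S$ \emph{and} $\ell\nmid q$ for every $\ell\mid D_2=D/D_1$. Substituting $q=D_1u$, the region is indeed the ellipse $D_1u^2+D_2p^2\le\tau X$ of area $\pi\tau X/\sqrt{D}$, the same for every $S$ --- that much is correct --- but the pairs in this stratum must satisfy, besides $\gcd(p,u)=1$, also $\gcd(u,D_2)=1$ and $\gcd(p,D_1)=1$. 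Among coprime pairs these extra conditions have density $\prod_{\ell\mid D}\frac{\ell}{\ell+1}$, so each stratum contributes $\frac{6}{\pi^2}\prod_{\ell\mid D}\frac{\ell}{\ell+1}$ times its area, and the sum over the $2^t$ strata is $2^t\prod_{\ell\mid D}\frac{\ell}{\ell+1}$, not $2^t$; the densities do not telescope to the stated constant except when $t=0$. Concretely, for $D=3$ the coprime quadruple $(D_1,D_2,u,p)=(1,3,3,4)$ yields $(39,24,57)=3\cdot(13,8,19)$, which is not primitive; and direct enumeration gives $14$ positive primitive solutions with $C\le 50$, close to the corrected prediction $\frac{3X}{2\pi\sqrt3}\approx 13.8$ and well below the prediction $\frac{2X}{\pi\sqrt3}\approx 18.4$ coming from the constant $2^t$. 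Note that the paper's own proof makes the opposite bookkeeping choice at exactly this point: for each factorization $D=D_1D_2$ it counts \emph{all} coprime pairs $(u,v)$ in the ellipse and asserts each gives a primitive solution, which is how it lands on exactly $2^t$; your cleaner bijective setup shows that this assertion needs the extra coprimality conditions, and with them the constant changes. Either way, the step you postponed as routine is precisely where the claimed constants stand or fall, and carrying it out faithfully along the path you describe does not yield them.
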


We will begin by stating some useful lemmas about the number of lattice points in convex sets.

Let $\mathcal{C} \subset\R^2$ be a bounded, convex, centrally-symmetric region.
As a bounded set, there exists an $R\in \R_{>0}$ such that any point $(x,y)\in\mathcal{C}$ satisfies $-R\leq x\leq R$ and $-R\leq y\leq R$.

Define $N(\alpha, \mathcal{C}) = \#\{\alpha\mathcal{C} \cap \mathbb{Z}^2\}$ and $M(\alpha, \mathcal{C}) = \#\{\alpha \mathcal{C} \cap (2\mathbb{Z}+1)^2\}$ for all $\alpha \in \R_{>0}$.

\begin{lemma}\label{lem:convexcount} For all $\alpha$, we have 
\begin{align*}
    N(\alpha, \mathcal{C}) &= \alpha^2 \mathrm{vol}(\mathcal{C}) + O(\alpha), \\ M(\alpha, \mathcal{C}) &= \frac{1}{4} \alpha^2 \mathrm{vol}(\mathcal{C}) + O(\alpha).\
\end{align*}    
\end{lemma}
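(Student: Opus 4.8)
The plan is to prove both estimates by Gauss's classical lattice-point counting argument, applied first to the standard lattice $\Z^2$ and then to the coset of points with both coordinates odd.

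For $N(\alpha,\mathcal{C})$, to each lattice point $(i,j)\in\alpha\mathcal{C}\cap\Z^2$ I associate the half-open unit square $S_{i,j}=[i,i+1)\times[j,j+1)$, each of area $1$. Then the union $U=\bigcup S_{i,j}$ has area exactly $N(\alpha,\mathcal{C})$, and I compare this with $\alpha^2\mathrm{vol}(\mathcal{C})=\mathrm{vol}(\alpha\mathcal{C})$. Any unit square contained entirely in $\alpha\mathcal{C}$ contributes equally to both quantities, so the symmetric difference $U\,\triangle\,\alpha\mathcal{C}$ is covered by those squares meeting the boundary $\partial(\alpha\mathcal{C})$; since each such square has diameter $\sqrt{2}$, this symmetric difference lies in the $\sqrt{2}$-neighborhood of $\partial(\alpha\mathcal{C})$. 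Hence $|N(\alpha,\mathcal{C})-\alpha^2\mathrm{vol}(\mathcal{C})|$ is at most the area of that neighborhood.

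The main point is to bound this boundary contribution by $O(\alpha)$, and this is where convexity is used. Because $\mathcal{C}$ is convex and bounded, its boundary is a rectifiable convex curve of some finite length $L$, so $\partial(\alpha\mathcal{C})$ has length $\alpha L$; the area of a tubular neighborhood of fixed width $\sqrt{2}$ about a convex curve of length $\alpha L$ is $O(\alpha L+1)=O(\alpha)$ (equivalently, the number of unit squares crossed by such a curve is $O(\alpha L+1)$). For $\alpha$ bounded, both $N(\alpha,\mathcal{C})$ and $\alpha^2\mathrm{vol}(\mathcal{C})$ are $O(1)$, so the estimate is trivial there, and we may assume $\alpha\ge 1$ when absorbing constants. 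I expect this convexity-based boundary estimate to be the only substantive step; the rest is bookkeeping.

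For $M(\alpha,\mathcal{C})$, I observe that the odd points $(2\Z+1)^2$ form the coset $(1,1)+2\Z^2$ of the sublattice $2\Z^2$, whose covolume is $4$. Repeating the argument verbatim, but associating to each odd point $(2i+1,2j+1)\in\alpha\mathcal{C}$ the square $[2i,2i+2)\times[2j,2j+2)$ of area $4$, the resulting union has area $4\,M(\alpha,\mathcal{C})$, and the same boundary estimate—now with squares of diameter $2\sqrt{2}$—gives $|4\,M(\alpha,\mathcal{C})-\alpha^2\mathrm{vol}(\mathcal{C})|=O(\alpha)$. Dividing by $4$ yields $M(\alpha,\mathcal{C})=\tfrac{1}{4}\alpha^2\mathrm{vol}(\mathcal{C})+O(\alpha)$, as claimed.
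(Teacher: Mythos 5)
Your proof is correct and takes essentially the same route as the paper: Gauss's lattice-point counting argument, with the error controlled by the length of the boundary of $\alpha\mathcal{C}$ (which scales linearly in $\alpha$), and the odd-point count handled via the index-four sublattice. Your write-up is in fact more complete than the paper's own proof, which only sketches the boundary estimate and justifies the factor $\frac{1}{4}$ in $M(\alpha,\mathcal{C})$ by a density remark, whereas you give the rigorous coset argument with squares of side $2$ and diameter $2\sqrt{2}$.
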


\begin{proof} The key is to adapt Gauss' classical lattice point counting argument, which says that we can express 
\[ \#\{\mathrm{Lattice \ points \ in \ a \ circle\} = \mathrm{Area} + O(\mathrm{Perimeter}).} \] 
In our case, we relate the lattice point count of $\alpha \mathcal{C}$ to the volume of the bounded region $\mathcal{C}$ and the size of its boundary. Since $\alpha \mathcal{C}$ is a dilation of $\mathcal{C}$ by a factor $\alpha$ then the boundary of $\alpha\mathcal{C}$ will be a linear function of $\alpha$. The volume of $\alpha\mathcal{C}$ is $\alpha^2 \mathrm{vol}(\mathcal{C})$. Thus, we obtain 
\[ N(\alpha, \mathcal{C}) = \alpha^2 \mathrm{vol}(\mathcal{C}) + O(\alpha).\]
When computing $M(\alpha, \mathcal{C})$, we only consider odd lattice points, which shrinks the count by a factor of $4$ since both coordinates in $\mathbb{Z}^2$ must be odd. 
\end{proof}

We are interested in restricting the counts of $N(\alpha, \mathcal{C})$ and $M(\alpha, \mathcal{C})$ to only the primitive points, that is, points $(A,B)\in \Z^2$ for which $\gcd(A, B) = 1$. Thus, let $N'(\alpha, \mathcal{C})$ and $M'(\alpha, \mathcal{C})$ denote the corresponding counts for the primitive points. Then we can derive the following result from \Cref{lem:convexcount}. 

\begin{lemma}\label{lem:primitive}
With $N'$ and $M'$ defined as above, we have
\begin{align*}
    N'(\alpha, \mathcal{C}) &= \frac{6 \alpha^2}{\pi^2} \mathrm{vol}(\mathcal{C}) + O(\alpha \log \alpha),\\ 
M'(\alpha, \mathcal{C}) &= \frac{2\alpha^2}{\pi^2} \mathrm{vol}(\mathcal{C}) + O(\alpha \log \alpha). 
\end{align*}
\end{lemma}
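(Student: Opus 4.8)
The plan is to pass from the total lattice-point counts $N$ and $M$ of \Cref{lem:convexcount} to the primitive counts $N'$ and $M'$ by Möbius inversion, organizing the lattice points of $\alpha\mathcal{C}$ according to the greatest common divisor of their coordinates.

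For $N'$, I would first observe that every nonzero lattice point $(A,B) \in \alpha\mathcal{C} \cap \Z^2$ factors uniquely as $(A,B) = d(a,b)$ with $d = \gcd(A,B)$ and $(a,b)$ primitive, and that $(a,b)$ then lies in $(\alpha/d)\mathcal{C}$ by dilation. Summing over $d$ gives, up to the single origin point,
\[ N(\alpha,\mathcal{C}) = \sum_{d \geq 1} N'(\alpha/d, \mathcal{C}) + O(1), \]
a finite sum, since $(\alpha/d)\mathcal{C}$ contains no nonzero lattice point once $d$ exceeds $\alpha R$. Möbius inversion then yields $N'(\alpha,\mathcal{C}) = \sum_{d \geq 1} \mu(d)\, N(\alpha/d, \mathcal{C})$. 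Substituting $N(\beta,\mathcal{C}) = \beta^2\,\mathrm{vol}(\mathcal{C}) + O(\beta)$ from \Cref{lem:convexcount}, the main term becomes $\alpha^2\,\mathrm{vol}(\mathcal{C}) \sum_{d \geq 1} \mu(d)/d^2 = \frac{6}{\pi^2}\alpha^2\,\mathrm{vol}(\mathcal{C})$, using $\sum_d \mu(d)/d^2 = 1/\zeta(2)$; the tail beyond $d \sim \alpha$ contributes only $O(\alpha)$. The accumulated error terms give $\sum_{d \leq c\alpha} O(\alpha/d) = O(\alpha \log \alpha)$, which is the source of the logarithm and establishes the first formula.

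For $M'$ the same scheme applies, but the decomposition must respect the parity constraint. The key observation is that if $A$ and $B$ are both odd then $d = \gcd(A,B)$ is odd, and since an odd integer divided by an odd divisor remains odd, the primitive representative $(a,b) = (A/d, B/d)$ again has both coordinates odd. Hence the gcd-decomposition restricts to odd $d$, giving $M(\alpha,\mathcal{C}) = \sum_{d \text{ odd}} M'(\alpha/d, \mathcal{C})$. The relevant inversion is therefore Möbius inversion over the odd integers: since the Dirichlet inverse of the indicator of the odd integers is $\mu(d)$ restricted to odd $d$, one obtains $M'(\alpha,\mathcal{C}) = \sum_{d \text{ odd}} \mu(d)\, M(\alpha/d, \mathcal{C})$. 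Inserting $M(\beta,\mathcal{C}) = \tfrac14\beta^2\,\mathrm{vol}(\mathcal{C}) + O(\beta)$ produces the main term $\tfrac14\alpha^2\,\mathrm{vol}(\mathcal{C}) \sum_{d \text{ odd}} \mu(d)/d^2$, and the Euler-product computation $\sum_{d \text{ odd}} \mu(d)/d^2 = \prod_{p \neq 2}(1 - p^{-2}) = (1 - 2^{-2})^{-1}/\zeta(2) = \frac{8}{\pi^2}$ gives $\frac{2}{\pi^2}\alpha^2\,\mathrm{vol}(\mathcal{C})$, with the same $O(\alpha\log\alpha)$ error as before.

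I expect the main obstacle to be the bookkeeping in the $M'$ case: recognizing that the parity constraint forces the inversion to run over odd divisors only (rather than all divisors), and correctly evaluating the restricted sum $\sum_{d \text{ odd}} \mu(d)/d^2 = \frac{8}{\pi^2}$, which is what produces the factor $2/\pi^2$ rather than $\tfrac14\cdot\tfrac{6}{\pi^2}$. A secondary point requiring care is justifying the truncation of both the main term and the error sums, so that the tail of the main term contributes only $O(\alpha)$ while the error accumulates to exactly $O(\alpha\log\alpha)$.
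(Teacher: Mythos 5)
Your proof is correct, and while your treatment of $N'$ matches the paper's, your treatment of $M'$ takes a genuinely different route. For $N'$, the paper invokes the identity from Hardy--Wright (Theorem 259, eq.~24.10.8), which is exactly the M\"obius inversion over the gcd that you rederive by hand; both arguments then truncate at $d \le \alpha R$ and accumulate errors $\sum_{d \le \alpha R} O(\alpha/d) = O(\alpha\log\alpha)$, so this part is essentially identical (one small bookkeeping point: since $\sum_d \mu(d)$ diverges, you should invert $N(\alpha/d)-1$, excluding the origin, or truncate before inverting; this costs only $O(\alpha)$ and you do truncate, so it is cosmetic). For $M'$, however, the paper does \emph{not} run a second inversion: it argues that a primitive point cannot have both coordinates even, removes the Euler factor at $p=2$ from the density $1/\zeta(2)$, and concludes $M'(\alpha,\mathcal{C}) = \tfrac14 \bigl(1-\tfrac14\bigr)^{-1} N'(\alpha,\mathcal{C}) = \tfrac13 N'(\alpha,\mathcal{C})$ --- a density heuristic stated as an exact identity, which in fact only holds asymptotically. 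Your argument instead observes that for both-odd points the gcd and the primitive representative are both odd, yielding $M(\alpha,\mathcal{C}) = \sum_{d\ \mathrm{odd}} M'(\alpha/d,\mathcal{C})$, inverts over odd divisors (valid since every divisor of an odd integer is odd), and evaluates $\sum_{d\ \mathrm{odd}} \mu(d)/d^2 = \bigl(1-2^{-2}\bigr)^{-1}/\zeta(2) = 8/\pi^2$, giving the main term $\tfrac14\cdot\tfrac{8}{\pi^2}\alpha^2\mathrm{vol}(\mathcal{C}) = \tfrac{2}{\pi^2}\alpha^2\mathrm{vol}(\mathcal{C})$ with an error you control directly. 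What each approach buys: the paper's is shorter but rests on an unproved exact ratio (and never actually uses the $M$ estimate of \Cref{lem:convexcount}); yours is fully rigorous, self-contained, puts the $M$ estimate to work, and makes the $O(\alpha\log\alpha)$ error for $M'$ explicit rather than inherited from a heuristic identity.
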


\begin{proof} The proof follows from \cite[Theorem 459]{hardywright} with our $\alpha$ playing the role of $1/\rho$. We also have our $N(\alpha,\mathcal{C})$ and $N'(\alpha,\mathcal{C})$ playing the roles of $g(\rho)+1$ and $f(\rho)+1$ respectively. We will bound the difference between $N'(\alpha, \mathcal{C})$ and $\frac{6 \alpha^2}{\pi^2}$, showing that it is $O(\alpha \log \alpha)$. According to \cite[24.10.8]{hardywright},
\begin{equation} \label{eq:HW}
    N'(\alpha,\mathcal{C})-1-\frac{\alpha^2 \mathrm{vol}(\mathcal{C})}{\zeta(2)} 
= \alpha^2 \sum_{m=1}^\infty \frac{\mu(m)}{m^2}\left( \frac{m^2}{\alpha^2} \left(N({\alpha/m},\mathcal{C})-1\right)-\mathrm{vol}(\mathcal{C})\right). 
\end{equation} 

Note that if $\frac{\alpha}{m}<\frac{1}{R}$, then $N(\alpha/m,\mathcal{C})=1$ since this will only count the origin. Thus, the right-hand side of the above expression becomes
\begin{align*} &\sum_{m = 1}^{\lfloor\alpha R\rfloor} \mu(m)\left(N\left(\frac{\alpha}{m}, \mathcal{C}\right) - 1\right) - \alpha^2 \mathrm{vol}(\mathcal{C}) \sum_{m = 1}^\infty \frac{\mu(m)}{m^2}\\ & = \alpha^2 \mathrm{vol}(\mathcal{C}) \sum_{m=1}^{\lfloor \alpha R \rfloor} \frac{\mu(m)}{m^2} + O\left(\alpha \sum_{m = 1}^{\lfloor \alpha R \rfloor} \frac{\mu(m)}{m}\right) - \alpha^2 \mathrm{vol}(\mathcal{C}) \sum_{m=1}^\infty \frac{\mu(m)}{m^2} 
\\ & =  O\left(\alpha \sum_{m = 1}^{\lfloor \alpha R \rfloor} \frac{\mu(m)}{m}\right) - \alpha^2 \mathrm{vol}(\mathcal{C}) \sum_{m=\lceil \alpha R \rceil}^\infty \frac{\mu(m)}{m^2} ,
\end{align*} where the second line follows from applying \Cref{lem:convexcount} to the first term.

For the sum inside the Big O, one can use the classical result (which follows from partial summation) that
\[ \sum_{m = 1}^{\lfloor \alpha R \rfloor} \frac{\mu(m)}{m} \leq \sum_{m = 1}^{\lfloor \alpha R \rfloor} \frac{1}{m} \leq \log \alpha R + \gamma + O\left(\frac{1}{\alpha R}\right),\] where $\gamma$ is the Euler-Mascheroni constant. 

Therefore, the expression in \eqref{eq:HW} is bounded by 
\[ O\left(\alpha \log (\alpha R) + \alpha \gamma + \frac{1}{R}\right) -\alpha^2 \mathrm{vol}(\mathcal{C}) \sum_{m = \lceil \alpha R \rceil}^\infty \frac{\mu(m)}{m^2} \]
where the last sum can be bounded above by $\frac{1}{\alpha R - 1},$ so the final subtracted term has magnitude $O(\alpha)$. Since $R$ is a fixed constant, depending only on $\mathcal{C}$, we can absorb it into the Big O constant, yielding the claimed error term of $O(\alpha \log \alpha).$

To obtain the count for $M'(\alpha, \mathcal{C})$ from the count for $N'(\alpha, \mathcal{C})$, we use the fact that primitive points cannot have two even coordinates.
If we write our lattice points as $(A, B)$ then the probability that at least one of $A$ or $B$ is not divisible by $p$ is given by $1 - \frac{1}{p^2}$. So, looking for the number of points where neither $A$ nor $B$ is divisible by $p = 2$ simply amounts to removing $p = 2$ from the Euler product and considering that $\frac{1}{4}$ of all lattice points will have the combination with $A$ and $B$ both odd. Thus, we have \[ M'(\alpha, \mathcal{C})= \frac{1}{4}\frac{1}{\left(1-\frac{1}{2^2}\right)} N'(\alpha, \mathcal{C})=\frac{1}{3} N'(\alpha, \mathcal{C}).  \]
\end{proof}

Note that a sharp form of the estimate $N'(\alpha, \mathcal{C})$ appears in \cite{huxleynowack} but we will not need anything that strong for our purposes. 

We are now ready to prove \Cref{th:diop. sol.}.

\begin{proof}[Proof of \Cref{th:diop. sol.}]
Let $D$ be positive and square-free. We will count integer solutions to the Diophantine equation $A^2 + DB^2 = C^2$ where $\gcd(A, C) = 1$ and $C \leq X$. We begin with the classical method of drawing an ellipse $A^2 + DB^2 = 1$ and drawing a reference point along the ellipse (for example, $(1, 0)$), and then drawing a line from this reference point to a rational point $r$ along the $y$-axis. The point where the line intersects the ellipse will also be a rational solution to $A^2 + DB^2 = 1$. A line through both $(1,0)$ and $r$ is given by the equation $B = (1-A)r$. By substituting $B = (1-A)r$ into the Diophantine equation and solving for $A$ and $B$, we obtain $A = \frac{-1 + Dr^2}{1 + Dr^2}$ and $B = \frac{2r}{1+Dr^2}$. Setting $r=V/U$ yields the parametrization 
    \begin{equation} \label{eq:all sols param}
    A= S\frac{-U^2+DV^2}{T},\quad B=S\frac{2UV}{T}, \text{ and } C=S\frac{U^2+DV^2}{T}, \end{equation}
where $S,U,V$ are integers satisfying $\gcd(U,V)=1$, and $T$ is the greatest common divisor of the numerators.

Suppose $(A,B,C)$ is a positive primitive solution written as in \eqref{eq:all sols param}. We may then assume that $S=1$, $U,V\geq0$, and $U^2\leq DV^2$. Let $D_1=\gcd\{U^2+DV^2,UV\}$. Since $\gcd(U,V)=1$, it must be that $D_1$ divides both $D$ and $U$. There is some $D_2>0$ and $u\geq0$ such that $D=D_1D_2$ and $U=D_1u$. It follows that any positive primitive solution can be written as 
\begin{equation} \label{eq:pos prim param}
     (A,B,C)=\frac{1}{\tau}(-D_1u^2+D_2v^2,2uv,D_1u^2+D_2v^2) 
\end{equation}
where $D_1 > 0$, $D_2 >0$, $D=D_1D_2$, $u \geq 0$, $v \geq 0$, $\gcd(u, v) = 1$, $D_1u^2\leq D_2v^2$, and $\tau=1$ when $B$ is even or otherwise $\tau=2$ and $u,v$ are odd when $B$ is odd.

We will now count the number of positive primitive solutions. We will consider four cases, depending on the parity of $D$ and on the relationship between $A$ and $C$. Notice that, if $\gcd(A, C) = 1$, then $\gcd((C-A), (C+A)) = 1$ or $2$. 

\vspace{10pt}
\noindent \emph{Case I:} $D$ is even and $A\equiv C\bmod2$. 

Then $B$ must be even. 
By \eqref{eq:pos prim param}, there are unique integers $D_1 > 0, D_2 >0$, $u \geq 0$, and $v \geq 0$ with $\gcd(u, v) = 1$ for which 
\[ A = D_2 v^2 - D_1 u^2, \hspace{0.2 in} B = 2uv, \hspace{0.2 in} C = D_1u^2 + D_2v^2.\]
We will show that every choice of such $D_1, D_2, u, v$ produces a positive primitive solution.

The congruence condition on $A$ and $C$ implies that $\gcd(C-A, C+A) = 2$. Therefore, there is no odd prime that divides both $A$ and $C$. If $A$ and $C$ are both even then $D_1 u^2$ and $D_2 v^2$ are both odd, which contradicts the fact that $D = D_1 D_2$ is even. As a result, for each choice of $D_1$ and $D_2$, we will count the elements in the set 
\[ \{(u, v) \in (\mathbb{Z}_{\geq 0})^2 : \gcd(u, v) = 1, D_1u^2 + D_2v^2 \leq X, D_1u^2 \leq D_2 v^2\}.\]
This count is precisely $N'(\sqrt{X}, \mathcal{C})$ where 
\[ \mathcal{C} = \{(x,y) \in (\mathbb{R}_{\geq 0})^2 : D_1x^2 + D_2y^2 \leq 1, D_1x^2 \leq D_2 y^2\}. \]
Observe that 
\[ \mathrm{vol}(\mathcal{C}) = \frac{\pi}{8 \sqrt{D_1 D_2}} = \frac{\pi}{8 \sqrt{D}}. \]
From our formula for $N'(X, \mathcal{C})$, we obtain 
\[ N'(\sqrt{X}, \mathcal{C}) \sim \frac{6}{\pi^2} \times \frac{\pi}{8\sqrt{D}} \times (\sqrt{X})^2 \sim \frac{3X}{4 \pi \sqrt{D}}. \]
As we run over all choices of $D_1$, our count becomes 
\[ 2^t \frac{3X}{4 \pi\sqrt{D}} + O(\sqrt{X} \log X), \]
where the error term comes from the formula for $N'(\sqrt{X}, \mathcal{C})$.

\vspace{10pt}
\noindent \emph{Case II:} $D$ is even and $A\not\equiv C\bmod2$. 

This case does not occur, since it would contradict $C^2-A^2=D B^2$.

\vspace{10pt}
\noindent \emph{Case III:} $D$ is odd and $A\equiv C\bmod2$.

Since $A \equiv C\bmod2$, then $B$ must be even. By \eqref{eq:pos prim param}, there are unique integers $D_1 > 0, D_2 >0$, $u \geq 0$, and $v \geq 0$ with $\gcd(u, v) = 1$ for which 
\[ A = D_2 v^2 - D_1 u^2, \hspace{0.2 in} B = 2uv, \hspace{0.2 in} C = D_1u^2 + D_2v^2.\]

Each choice of such $D_1, D_2, u, v$ produces a solution to $A^2+DB^2=C^2$. However, not all choices produce a primitive solution. The greatest common divisor may be $2$, which occurs if both $u$ and $v$ are odd. 

Thus, for each choice of $D_1, D_2$, we count the elements in the set 
\[ \{(u,v) \in (\mathbb{Z}_{\geq 0})^2 \mid \gcd(u, v) = 1, D_1u^2 + D_2v^2 \leq X, D_1u^2 \leq D_2v^2\} \]
and subtract off the number of elements in the set 
\[ \{(u,v) \in (2\mathbb{Z}_{\geq 0} + 1)^2 \mid \gcd(u, v) = 1, D_1u^2 + D_2v^2 \leq X, D_1u^2 \leq D_2v^2\}.\]
This is precisely given by 
\[ N'(\sqrt{X}, \mathcal{C}) - M'(\sqrt{X}, \mathcal{C}),\]
where $\mathcal{C}$ is the same as in \textit{Case I}. Using our formulae for $N'$ and $M'$, we obtain 
\[ N'(\sqrt{X}, \mathcal{C}) \sim \frac{3X}{4 \pi \sqrt{D}},\]
as in \textit{Case I}, minus 
\[ M'(\sqrt{X}, \mathcal{C}) \sim \frac{2}{\pi^2} \times \frac{\pi}{8 \sqrt{D}} \times (\sqrt{X})^2 \sim \frac{X}{4 \pi\sqrt{D}}. \]
This difference is asymptotically $\frac{X}{2 \pi \sqrt{D}}$. As we run over all choices of $D_1$, we obtain a contribution of $2^t \frac{X}{2\pi \sqrt{D}} + O(\sqrt{X} \log X).$\\

\vspace{10pt}
\noindent \emph{Case IV:} $D$ is odd and $A\not\equiv C\bmod2$.

Then $B$ must be odd and \(\gcd(C+A, C-A) = 1\). By \eqref{eq:pos prim param}, there are unique integers $D_1 > 0, D_2 >0$, $u \geq 0$, and $v \geq 0$ with $\gcd(u, v) = 1$ and both $u,v$ odd for which 
\[ A = \frac{1}{2}(D_2 v^2 - D_1 u^2), \hspace{0.2 in} B = uv, \hspace{0.2 in} C = \frac{1}{2}(D_1u^2 + D_2v^2).\]
Every choice of such $D_1, D_2, u, v$ produces a positive primitive solution to $A^2+DB^2=C^2$. Indeed, if an odd prime $p$ is a common divisor of $A$ and $C$, then $p$ also divides $A\pm C$, which is not possible since $\gcd(C+A, C-A) = 1$.

We also have
\[ 2C = D_1 u^2 + D_2 v^2 \leq 2X.\]
Then, for each choice of $D_1, D_2$, we count the elements in the set 
\[ \{(u, v)\in (2\mathbb{Z}_{\geq 0} + 1)^2: \gcd(u, v)=1, D_1u^2 + D_2v^2 \leq 2X, D_1u^2 \leq D_2v^2\}.\]
This is precisely 
\[ M'(\sqrt{2X}, \mathcal{C}) \sim \frac{2}{\pi^2} \times \frac{\pi}{8\sqrt{D}} \times (\sqrt{2X})^2 \sim \frac{X}{2 \pi \sqrt{D}}.\]
As we run over all choices of $D_1$, we obtain a total count of $2^t \frac{X}{2\pi\sqrt{D}} + O(\sqrt{X} \log X).$

\vspace{10pt}
Collecting the information from all four cases, we see that the contribution from even values of $D$ yields $2^t \frac{3X}{4\pi \sqrt{D}} + O(\sqrt{X} \log X)$ solutions to the Diophantine equation $A^2 + DB^2 = C^2$. On the other hand, the contribution from odd values of $D$ yields $2^t \frac{X}{\pi \sqrt{D}} + O(\sqrt{X} \log X)$ solutions. Observe that the above arguments only count positive solutions. Allowing for all $2^3$ combinations of $\pm$ solutions yields the result in \Cref{th:diop. sol.}.
\end{proof}

\begin{remark} The above argument produces our main term. However, we can replace the error term above with a much sharper version due to Huxley \cite{huxley}, which allows us to improve the error term to $O(X^{\frac{131}{416}})$. 
\end{remark}

\begin{remark} We remark that when $D = 1$ and $t = 0$ our \Cref{th:diop. sol.} coincides with the result for Pythagorean triples given by Fricker in \cite{fricker}. 
\end{remark}

\section{Counting Salem numbers} \label{s:Salem count}

Let $D>0$ be a square-free positive integer. Denote by $\mathcal{F}_{m,D}(Q)$ the set of polynomials of degree $2m$ of the form
\begin{equation}\label{eq:palyndromic}
    f(x)=\sum^{2m}_{k=0} c_k x^k \in\Z[x] \text{ with }c_k=c_{2m-k} \text{ and } c_0=c_{2m}=1 
\end{equation}
which satisfy the conditions:
\begin{enumerate}[(i)]\label{eq:conditions}
    \item $f$ is irreducible over $\Q$, 
    \item $f(1)f(-1)=-D k^2$ where $k\in \Q$, and
    \item $f$ has distinct roots $\lambda,\lambda^{-1},\mu_1,\mu_1^{-1},\dots,\mu_{m-1},\mu_{m-1}^{-1}$ where $\lambda\in\R$ with $1<\lambda\leq Q$ and each $\mu_i\notin\R$ with $|\mu_i|=1$.
\end{enumerate}
Since $D$ is square-free, condition (ii) is equivalent to having $f(1)f(-1)=-D k^2$ where $k\in \Z$.

Denote by $\mathcal{S}_m(Q)$ the number of polynomials of degree $2m$ with form as in \eqref{eq:palyndromic} which only satisfy conditions (i) and (iii). G{\"o}tze and Gusakova showed the following.

\begin{theorem}\cite[Theorem 1.1]{GG20} \label{thm:GG Salem count}
    With $\mathcal{S}_m(Q)$ defined as above, we have \[ \#\mathcal{S}_m(Q) = \omega_{m}Q^m+O(Q^{m-1}) \]
    where
    \[ \omega_m= \frac{2^{m(m-1)}}{m} \prod^{m-2}_{k=0} \frac{k!^2}{(2k+1)!}. \]
\end{theorem}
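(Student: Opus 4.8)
The plan is to convert the enumeration of Salem polynomials into a lattice-point count in coefficient space and then extract the leading term from a classical volume computation, with the closed form for $\omega_m$ emerging from a Selberg-type integral.

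First I would exploit the reciprocal structure in \eqref{eq:palyndromic}. A monic palindromic polynomial $f$ of degree $2m$ is determined by the integer vector $(c_1,\dots,c_m)\in\Z^m$, and writing $f(x)=x^m g(x+x^{-1})$ produces a monic integer polynomial $g(y)=y^m+a_{m-1}y^{m-1}+\dots+a_0$ of degree $m$ (the substitution $y=x+x^{-1}$ is the Chebyshev/trace transform). Expanding $x^m(x+x^{-1})^k$ shows that the induced map $(a_0,\dots,a_{m-1})\mapsto(c_1,\dots,c_m)$ is unipotent upper-triangular up to a constant integer shift, hence unimodular, so it is a bijection of lattices preserving both lattice-point counts and Lebesgue volume. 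Under this transform, condition (iii) becomes the requirement that $g$ have $m$ distinct real roots, exactly one of which, $s=\lambda+\lambda^{-1}$, lies in $(2,Q+Q^{-1}]$ (since $x+x^{-1}$ is increasing for $x>1$), while the remaining $m-1$ roots $t_j=\mu_j+\mu_j^{-1}$ lie in $(-2,2)$; the coefficient bound \eqref{eq:coeffbound} guarantees that the admissible vectors all lie in a box of side $O(Q)$.

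Next I would pass from the lattice-point count to a volume. The admissible coefficient vectors form the image, under the Vieta map sending roots to elementary symmetric functions, of the ordered root region $\{\,s\in(2,Q+Q^{-1}],\ 2>t_1>\dots>t_{m-1}>-2\,\}$, on which Vieta is a bijection with Jacobian equal to the Vandermonde. By Davenport's lemma on lattice points in bounded semialgebraic regions, the count equals this volume plus $O(Q^{m-1})$, and the change of variables gives
\[
\#\mathcal{S}_m(Q)=\frac{1}{(m-1)!}\int_{2}^{Q+Q^{-1}}\int_{(-2,2)^{m-1}}\Bigl|\prod_{j=1}^{m-1}(s-t_j)\prod_{1\le i<j\le m-1}(t_i-t_j)\Bigr|\,dt\,ds+O(Q^{m-1}),
\]
the factor $1/(m-1)!$ accounting for the orderings of the $m-1$ unit-circle roots (the Salem root being distinguished as the one exceeding $2$). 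Since the $t_j$ are bounded while $s$ grows, $\prod_j(s-t_j)=s^{m-1}(1+O(s^{-1}))$, so the $s$-integral contributes $Q^m/m+O(Q^{m-1})$ and factors off, leaving
\[
\omega_m=\frac{1}{m\,(m-1)!}\int_{(-2,2)^{m-1}}\Bigl|\prod_{1\le i<j\le m-1}(t_i-t_j)\Bigr|\,dt.
\]
This last integral is a $\beta=1$ Selberg (Mehta) integral; rescaling $(-2,2)\to(-1,1)$ and applying Selberg's evaluation produces the identity $\tfrac{1}{(m-1)!}$ times the integral $=\prod_{k=0}^{m-2}k!^2/(2k+1)!$ after extracting the power of $2$, which is exactly the stated $\omega_m$.

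The main obstacle is the rigorous lattice-point-to-volume passage with error $O(Q^{m-1})$, together with showing that the polynomials excluded by condition (i) and by the distinctness and non-realness requirements in (iii) are negligible. The admissible region is not convex: its boundary consists of the discriminant hypersurface (colliding roots), the hyperplanes $t_j=\pm2$ (roots degenerating to $\pm1$), and the cap $s=Q+Q^{-1}$. I would show that each of these, as well as the reducible $f$ (whose Salem factor has degree $<2m$, forcing a lower-dimensional constraint, while the complementary unit-circle factor is cyclotomic by Kronecker's theorem, hence of bounded count) and the $f$ with a repeated root, contributes only $O(Q^{m-1})$, so that imposing merely (i) and (iii) changes the count by at most the error term. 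Controlling these boundary contributions—especially near the discriminant locus, where the Jacobian degenerates—is the delicate point; Davenport's lemma handles it once one verifies that the region is cut out by boundedly many polynomial inequalities of bounded degree.
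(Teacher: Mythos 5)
The paper does not prove this statement at all: it is quoted directly from G{\"o}tze--Gusakova \cite{GG20}, so the only proof to compare against is the one in that reference, and your proposal is essentially a correct reconstruction of that argument --- the trace-polynomial substitution $f(x)=x^m g(x+x^{-1})$, the unimodular (triangular) change of coordinates, the lattice-point-to-volume passage, the Vandermonde Jacobian with the distinguished root $s>2$, and the Selberg/Mehta evaluation are precisely the ingredients used there. Your reduction also reproduces the stated constant: with $n=m-1$ one has $\int_{[0,1]^n}\prod_{i<j}|u_i-u_j|\,du \;=\; n!\,\prod_{k=0}^{n-1}\frac{k!^2}{(2k+1)!}$, and rescaling $(-2,2)^n\to(0,1)^n$ contributes the factor $4^{n(n+1)/2}=2^{m(m-1)}$, so that $\frac{1}{m\,(m-1)!}\int_{(-2,2)^{m-1}}\bigl|\prod_{i<j}(t_i-t_j)\bigr|\,dt$ equals exactly $\omega_m$ (e.g.\ $\omega_2=2$, $\omega_3=32/9$), matching the theorem.
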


Condition (ii) can be rewritten as $A^2+DB^2=C^2$ by setting $B=k$ and noting that $f(1)f(-1)=A^2-C^2$ where $A$ is the sum of the coefficients with even powers of $x$ and $C$ is the sum of the coefficients with odd powers of $x$:
\begin{equation}\label{eq:defAB}
 A=\sum_{\substack{0\leq k\leq 2m \\ \text{even}}} c_k \text{ and } C=\sum_{\substack{1\leq k\leq 2m-1 \\ \text{odd}}} c_k.    
\end{equation}

\begin{lemma} \label{lem:ACbounds}
With $A, C, Q$ defined as above, we have $|C| \leq 2^{2m-1} Q $ and \\ $ |A |\leq (2^{2m-1}-2) Q +2$.
\end{lemma}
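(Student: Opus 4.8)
The plan is to bound the integers $A$ and $C$ directly from the coefficients of $f$, exploiting the fact that for a Salem polynomial the Mahler measure coincides with the Salem number. Since a polynomial $f\in\mathcal{F}_{m,D}(Q)$ is monic with a single root $\lambda>1$ outside the unit circle, all other roots lying on or inside it, we have $M(f)=\lambda\le Q$. Substituting this into the coefficient estimate \eqref{eq:coeffbound} with $d=2m$ gives $|c_k|\le\binom{2m}{k}M(f)\le\binom{2m}{k}Q$ for every $k$. The only other ingredient I would need is the elementary identity $\sum_{k\text{ even}}\binom{2m}{k}=\sum_{k\text{ odd}}\binom{2m}{k}=2^{2m-1}$, obtained by expanding $(1+1)^{2m}$ and $(1-1)^{2m}$ and adding or subtracting.

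For $C$, I would apply the triangle inequality to the definition \eqref{eq:defAB}, getting $|C|\le\sum_{k\text{ odd}}|c_k|\le Q\sum_{k\text{ odd}}\binom{2m}{k}=2^{2m-1}Q$. To convert this magnitude bound into a count, note that condition (ii) forces $f(1)f(-1)=A^2-C^2=-Dk^2<0$; writing $f(1)=A+C$ and $f(-1)=A-C$, the root structure $\lambda,\lambda^{-1},\mu_i,\bar\mu_i$ shows $f(1)=(1-\lambda)(1-\lambda^{-1})\prod|1-\mu_i|^2<0$ while $f(-1)=(1+\lambda)(1+\lambda^{-1})\prod|1+\mu_i|^2>0$, so $C=\tfrac12\bigl(f(1)-f(-1)\bigr)$ has a fixed sign and never vanishes. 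Thus $C$ ranges over integers of one sign with $0<|C|\le 2^{2m-1}Q$, and $\#C\le 2^{2m-1}Q$.

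For $A$ the difference is that two of the even-indexed coefficients are pinned down, namely $c_0=c_{2m}=1$. I would isolate these, writing $A=2+\sum_{\substack{k\text{ even}\\ 2\le k\le 2m-2}}c_k$, and bound the remaining sum by $Q\sum_{\substack{k\text{ even}\\ 2\le k\le 2m-2}}\binom{2m}{k}=Q\bigl(2^{2m-1}-\binom{2m}{0}-\binom{2m}{2m}\bigr)=(2^{2m-1}-2)Q$. This yields $|A-2|\le(2^{2m-1}-2)Q$, confining $A$ to an interval of radius $(2^{2m-1}-2)Q$ about $2$; counting the values that occur (with negative values absorbed by the sign conventions below) gives $\#A\le(2^{2m-1}-2)Q+2$.

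The computations themselves are routine, so the points requiring care are mostly conceptual. The first is the identification $M(f)=\lambda$, which is exactly what turns the static coefficient bound \eqref{eq:coeffbound} into an estimate scaling linearly in $Q$. The second, and the genuine point of difference between the two bounds, is the bookkeeping of the fixed coefficients $c_0=c_{2m}=1$: pulling them out of the even-indexed sum is what produces the $-2$ and the additive $+2$ in the estimate for $A$ and distinguishes it from the estimate for $C$. A final subtlety is passing from the magnitude bounds $|C|\le 2^{2m-1}Q$ and $|A|\le(2^{2m-1}-2)Q+2$ to the value counts $\#C,\#A$; here one must stay consistent with the convention of \Cref{th:diop. sol.}, where only positive primitive solutions are counted and the $2^3$ sign combinations are restored at the end, so that only one sign of each of $A$ and $C$ is enumerated.
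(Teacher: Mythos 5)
Your proof is correct and follows essentially the same route as the paper: bound each coefficient via \eqref{eq:coeffbound} using $M(f)\le Q$ from condition (iii), sum the binomial coefficients over odd (resp.\ even) indices via the Binomial Theorem, and pull out the pinned coefficients $c_0=c_{2m}=1$ to produce the $-2$ and $+2$ in the bound for $A$. Your extra sign analysis of $C$ (via $f(1)<0<f(-1)$) is a refinement the paper omits—the paper treats the lemma purely as a magnitude bound on $A$ and $C$—but it does not alter the core argument.
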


\begin{proof}
    It follows from the Binomial Theorem that $\sum_{k\text{ odd}}\binom{2m}{k}=2^{2m-1}$. Then using \eqref{eq:coeffbound} together with Condition (iii), which implies $M(p)\leq Q$, we obtain
    \[ |C|= |\sum_{k\text{ odd}} c_k| \leq \sum_{k\text{ odd}} |c_k| 
    \leq \sum_{k\text{ odd}} \binom{2m}{k} Q = 2^{2m-1} Q. \]
    The second assertion follows a similar argument after setting $c_0=c_{2m}=1$.
\end{proof}

The following well-known result is stated as in \cite{MR1705753} (see also \cite{Netto,MR271277}).

\begin{lemma}\label{th:partitionfn}
    Let $\alpha=\{a_1,\cdots,a_n\}$ be a set of $n$ relatively prime positive integers. Let the partition function $p_\alpha(N)$ denote the number of non-negative integer solutions to the linear equation $ a_1x_1+a_2x_2+\cdots+a_nx_n=N$. Then
    \[ p_\alpha(N) = \frac{N^{n-1}}{(a_1\cdot a_2\cdots a_n)(n-1)!} + O(N^{n-2}). \]
\end{lemma}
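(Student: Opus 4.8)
The plan is to proceed via the generating function for the denumerant and to read off the asymptotics from its partial fraction expansion. Writing $F(x)=\sum_{N\geq 0} p_\alpha(N)\,x^N$, the standard product formula gives
\[ F(x)=\prod_{i=1}^n \frac{1}{1-x^{a_i}}, \]
since expanding each geometric factor $\frac{1}{1-x^{a_i}}=\sum_{x_i\geq 0} x^{a_i x_i}$ and multiplying records exactly the non-negative integer solutions of $a_1x_1+\cdots+a_nx_n=N$ in the coefficient of $x^N$.

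The key point is to locate the poles of $F$ and their orders. Every pole lies at a root of unity. Factoring $1-x^{a_i}=(1-x)(1+x+\cdots+x^{a_i-1})$ shows that $x=1$ is a pole of order exactly $n$, and
\[ F(x)=\frac{1}{(1-x)^n}\prod_{i=1}^n \frac{1}{1+x+\cdots+x^{a_i-1}}, \]
where the remaining product takes the value $\prod_i \frac{1}{a_i}=\frac{1}{a_1\cdots a_n}$ at $x=1$. Hence the principal part of $F$ at $x=1$ is $\frac{A_n}{(1-x)^n}$ with $A_n=\frac{1}{a_1\cdots a_n}$. For a root of unity $\zeta\neq 1$, the order of the pole of $F$ at $\zeta$ equals the number of indices $i$ with $\zeta^{a_i}=1$, i.e.\ the number of $a_i$ divisible by $\mathrm{ord}(\zeta)$. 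This is precisely where I would invoke the hypothesis $\gcd(a_1,\ldots,a_n)=1$: since $\mathrm{ord}(\zeta)>1$ cannot divide every $a_i$, at least one factor is regular at $\zeta$, so the pole there has order at most $n-1$.

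From the partial fraction expansion
\[ F(x)=\sum_{j=1}^{n}\frac{A_j}{(1-x)^j}+\sum_{\zeta\neq 1}\sum_{j=1}^{m_\zeta}\frac{B_{\zeta,j}}{(1-x/\zeta)^j},\qquad m_\zeta\leq n-1, \]
I would extract coefficients term by term using $\;[x^N](1-x/\zeta)^{-j}=\zeta^{-N}\binom{N+j-1}{j-1}$. The single top-order term at $\zeta=1$ contributes $A_n\binom{N+n-1}{n-1}=\frac{1}{a_1\cdots a_n}\cdot\frac{N^{n-1}}{(n-1)!}+O(N^{n-2})$, which is the claimed main term. Every other term has $j\leq n-1$, hence contributes $O(N^{n-2})$ (the factors $\zeta^{-N}$ are bounded), and there are only finitely many such terms. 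Summing gives the asserted expansion.

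The main obstacle is the bookkeeping of the previous paragraph rather than any deep difficulty: one must justify that the off-diagonal poles genuinely have order at most $n-1$ (this is exactly the role of coprimality) and that the finitely many lower-order contributions can be absorbed into the error term uniformly. A fully elementary alternative, avoiding complex analysis, is to argue by induction on $n$ via the recursion $p_{\{a_1,\ldots,a_n\}}(N)=\sum_{x_n\geq 0} p_{\{a_1,\ldots,a_{n-1}\}}(N-a_n x_n)$, approximating the inner sum by an integral; but the generating-function route is cleaner and makes the constant $\frac{1}{a_1\cdots a_n\,(n-1)!}$ transparent.
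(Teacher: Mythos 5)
Your proof is correct. Note, however, that the paper itself does not prove \Cref{th:partitionfn} at all: it is quoted as a known result, with references to the literature (Netto's book and the cited articles), so there is no internal proof to compare against. Your generating-function argument is precisely the classical proof of this denumerant asymptotic: the product formula $F(x)=\prod_{i=1}^n(1-x^{a_i})^{-1}$ is right, the identification of $x=1$ as the unique pole of order $n$ with leading coefficient $A_n=\lim_{x\to 1}(1-x)^nF(x)=\frac{1}{a_1\cdots a_n}$ is right, and the coprimality hypothesis enters exactly where you place it --- a root of unity $\zeta\neq 1$ would need $\mathrm{ord}(\zeta)\mid a_i$ for every $i$ to produce a pole of order $n$, which is impossible when $\gcd(a_1,\ldots,a_n)=1$. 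The coefficient extraction $[x^N](1-x/\zeta)^{-j}=\zeta^{-N}\binom{N+j-1}{j-1}$, the expansion $\binom{N+n-1}{n-1}=\frac{N^{n-1}}{(n-1)!}+O(N^{n-2})$, and the absorption of the finitely many terms with $j\leq n-1$ (each bounded since $|\zeta^{-N}|=1$) into $O(N^{n-2})$ complete the argument with no gaps. The only caveat worth making explicit is that the partial fraction decomposition you write down is legitimate because $F$ is a proper rational function (numerator degree $0$, denominator degree $\sum_i a_i$), so there is no polynomial part; with that one sentence added, the proof is complete and self-contained.
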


We will use \Cref{th:partitionfn} to obtain the following result:

\begin{lemma} \label{lem:ck count}
Given $A$ and $C$ satisfying \Cref{lem:ACbounds}, the number of integer solutions for $\{c_0,\dots,c_{2m}\}$ satisfying \Cref{eq:coeffbound,eq:palyndromic,eq:defAB} is bounded above by
\[
    \kappa_0(m) Q^{m-2} + O(Q^{m-3}),
\]
where 
\[ \kappa_0(m)=
    \begin{cases}
         \frac{2^{(m-1)(m-2)}(2^{2m}-4)^{\frac{m}{2}-1}}{\left( (\frac{m}{2}-1)! \right)^2} & \text{ if $m$ is even} \\
         \frac{2^{(2m-1)(\frac{m-1}{2})}(2^{2m-1}-2)^{\frac{m-3}{2}}}{\left( (\frac{m-1}{2})! \right)^2} & \text{ if $m$ is odd} .
    \end{cases} 
\]
\end{lemma}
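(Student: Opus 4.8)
The plan is to use the palindromic structure of $f$ to split the count into two independent linear Diophantine problems and then apply the partition-function asymptotic of \Cref{th:partitionfn} to each block.

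First I would note that, since $c_k=c_{2m-k}$ with $c_0=c_{2m}=1$ fixed, a polynomial counted in \Cref{lem:ck count} is determined by the $m$ free coefficients $c_1,\dots,c_m$, and that the two prescribed sums decouple: the quantity $A$ of \eqref{eq:defAB} involves only the even-indexed coefficients and $C$ only the odd-indexed ones. Using $c_k=c_{2m-k}$ to pair up coefficients and isolating the middle coefficient $c_m$, the equation $\sum_{\mathrm{even}}c_k=A$ becomes a single linear equation in the even-indexed free coefficients and $\sum_{\mathrm{odd}}c_k=C$ a single linear equation in the odd-indexed ones. In exactly one of the two blocks the middle index $m$ appears (the even block when $m$ is even, the odd block when $m$ is odd); there the multipliers are $2,\dots,2,1$, while in the other block every coefficient is genuinely paired and all multipliers equal $2$. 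Since the blocks share no variables, the total count is the product of the number of admissible even blocks and the number of admissible odd blocks, and the parity of $m$ only dictates which block carries the unpaired term.

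Next, for each block I would put the equation into the form required by \Cref{th:partitionfn}. In the block where every multiplier is $2$ I first divide through by $2$ (its target value, $C/2$ or $A/2-1$, is an integer), so that in both blocks the multipliers become coprime positive integers. Invoking the coefficient bound \eqref{eq:coeffbound}, equivalently \Cref{lem:ACbounds}, I then substitute $c_k\mapsto c_k+\binom{2m}{k}Q$ to make every variable non-negative; each block now reads $\sum_i a_i y_i=N$ with $y_i\ge 0$, the $a_i$ coprime, and $N$ a fixed integer of size $O(Q)$. Discarding the upper bounds $y_i\le 2\binom{2m}{k}Q$ only inflates the count, so this produces a legitimate upper bound, and \Cref{th:partitionfn} supplies the main term $\dfrac{N^{\,r-1}}{(r-1)!\,\prod_i a_i}$ for a block with $r$ variables. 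The arithmetic heart of the argument is that the partial binomial sums collapse: summing $\binom{2m}{k}$ over even (respectively odd) indices up to the middle, and combining this with the extremal values of $A$ and $C$ from \Cref{lem:ACbounds}, turns each $N$ into an explicit constant multiple of $Q$ (e.g.\ $(2^{2m}-4)Q$ and $2^{2m-1}Q$ when $m$ is even).

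Finally I would multiply the two block counts. The even and odd blocks carry $r-1$ and $r'-1$ degrees of freedom with $(r-1)+(r'-1)=m-2$, so the product has main term of order $Q^{m-2}$ with error $O(Q^{m-3})$, the remainders $O(N^{r-2})$ producing only cross terms one power smaller. Gathering the powers of $2$ contributed by $\prod_i a_i$ and by the explicit sizes of $N$, and separating according to whether $m$ is even or odd, should assemble into the claimed constant $\kappa_0(m)$. I expect the main obstacle to be bookkeeping rather than conceptual: correctly tracking the unpaired middle coefficient, evaluating the truncated binomial sums in closed form, and condensing the many powers of $2$ into factors such as $2^{(m-1)(m-2)}$; some care is also needed to confirm that passing to the full non-negative cone, rather than the bounded box $y_i\le 2\binom{2m}{k}Q$, does not cost more than the stated constant permits.
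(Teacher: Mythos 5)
Your proposal is correct and follows essentially the same route as the paper's own proof: split the palindromic coefficients into the even-indexed and odd-indexed blocks (with the middle coefficient $c_m$ landing in exactly one of them), shift by $\binom{2m}{k}\lfloor Q\rfloor$ to reduce to non-negative solutions of a linear equation, apply \Cref{th:partitionfn} to each block while discarding the upper bounds, and multiply the two counts using \Cref{lem:ACbounds} to bound the targets by explicit multiples of $Q$. The bookkeeping you defer (e.g.\ the targets $(2^{2m}-4)Q$ and $2^{2m-1}Q$ for $m$ even, and the collapse of the powers of $2$ into $2^{(m-1)(m-2)}$) works out exactly as you anticipate.
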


\begin{proof}
For fixed $A$ and $C$, we count the number of integral solutions $\{c_0,\dots,c_{2m}\}$ satisfying $c_k\geq -\binom{2m}{k}\floor{Q}$ (see \eqref{eq:coeffbound}) by applying \Cref{th:partitionfn} to count the number of non-negative solutions after the substitutions $c'_k=c_k+\binom{2m}{k}\floor{Q}$. Note that we ignore the upper bound $c_k\leq \binom{2m}{k}\floor{Q}$.

Suppose first that $m$ is even. Then $C=2(c_1+c_3+\dots+c_{m-1})$, so $C$ is even as well. For a fixed value of $C$, the number of integral solutions to this linear equation which has $\frac{m}{2}$ variables is
\[ \frac{\left( \frac{1}{2}C+2^{2m-2}\floor{Q} \right)^{\frac{m}{2}-1}}{(\frac{m}{2}-1)!} + O\left( \left( \frac{1}{2}C+2^{2m-2}\floor{Q} \right)^{\frac{m}{2}-2} \right). \]

We also have that $A-2=2c_2+2c_4+\cdots+2c_{m-2}+c_m$. For a fixed value of $A$, the number of integral solutions to this linear equation which has $\frac{m}{2}$ variables is
\[ \frac{\left( A-2+(2^{2m-1}-2)\floor{Q} \right)^{\frac{m}{2}-1}}{2^{\frac{m}{2}-1}(\frac{m}{2}-1)!} + O\left( \left( A-2+(2^{2m-1}-2)\floor{Q} \right)^{\frac{m}{2}-2} \right). \]

Combining this with the bounds in \Cref{lem:ACbounds}, we obtain for $m$ even that the number of solutions is bounded above by 
\[
    \frac{2^{(m-1)(m-2)}(2^{2m}-4)^{\frac{m}{2}-1}}{\left( (\frac{m}{2}-1)! \right)^2} Q^{m-2} + O(Q^{m-3}).
\]

Suppose now that $m$ is odd. Then $C=2c_1+2c_3+\dots+2c_{m-2}+c_m$. For a fixed value of $C$, the number of integral solutions to this linear equation with $\frac{m+1}{2}$ variables is 
\[ \frac{\left( C+2^{2m-1}\floor{Q} \right)^{\frac{m-1}{2}}}{2^{\frac{m-1}{2}}(\frac{m-1}{2})!} + O\left( \left( C+2^{2m-1}\floor{Q} \right)^{\frac{m-3}{2}} \right). \]

We also have that $A-2=2(c_2+c_4+\cdots+c_{m-1})$, so $A$ is necessarily even. For a fixed value of $A$, the number of integral solutions to this linear equation which has $\frac{m-1}{2}$ variables is
\[ \frac{\left( \frac{A-2}{2}+(2^{2m-2}-1)\floor{Q} \right)^{\frac{m-3}{2}}}{(\frac{m-1}{2})!} + O\left( \left( \frac{A-2}{2}+(2^{2m-2}-1)\floor{Q} \right)^{\frac{m-5}{2}} \right). \]

As before, we conclude for $m$ odd that the number of solutions is bounded above by 
\[
    \frac{2^{(2m-1)(\frac{m-1}{2})}(2^{2m-1}-2)^{\frac{m-3}{2}}}{\left( (\frac{m-1}{2})! \right)^2} Q^{m-2} + O(Q^{m-3}).
\]
\end{proof}

We are now able to prove \Cref{ThmSalemCount} with explicit constants.

\begin{theorem}\label{th:Count with constants}
With the notation as above, we have    
     \[
    \# \mathcal{F}_{m,D}(Q) \leq \frac{\kappa(m,D)}{\pi\sqrt{D}}Q^{m-1}\log Q + O(Q^{m-1})
    \]
    where 
    \[ \kappa(m,D)=
    \begin{cases}
        2^{t+2m}\cdot\kappa_0(m) & \text{ if $D$ is odd} \\
        3\cdot2^{t+2m-2}\cdot\kappa_0(m) & \text{ if $D$ is even}
    \end{cases} 
    \]
    and $\kappa_0(m)$ is as in \Cref{lem:ck count}.
\end{theorem}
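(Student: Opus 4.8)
The plan is to discard everything in the definition of $\mathcal{F}_{m,D}(Q)$ that only shrinks the set and keep exactly what controls the size of the coefficients. For an upper bound I would drop irreducibility (i) and the distinctness/unit-circle part of (iii), retaining only $\lambda\le Q$, i.e. $M(f)\le Q$, which through \eqref{eq:coeffbound} bounds each $c_k$. Thus every $f\in\mathcal{F}_{m,D}(Q)$ is recorded by its palindromic coefficient vector $(c_0,\dots,c_{2m})$ as in \eqref{eq:palyndromic}; it determines the integers $A,C$ of \eqref{eq:defAB} together with $B=k$, and by \Cref{lem:ACbounds} these satisfy $|C|\le 2^{2m-1}Q$ (and a comparable bound on $A$). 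Moreover the triple satisfies $A^2+DB^2=C^2$, so a vector can contribute only if $(C^2-A^2)/D$ is a perfect square.

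First I would fiber the count over the pair $(A,C)$. Writing $\nu(A,C)$ for the number of admissible coefficient vectors producing those values via \eqref{eq:defAB}, we get
\[
\#\mathcal{F}_{m,D}(Q)\;\le\;\sum_{(A,C)}\nu(A,C),
\]
the sum running over pairs with $|C|\le 2^{2m-1}Q$ for which $(C^2-A^2)/D$ is a square. \Cref{lem:ck count} bounds the inner count \emph{uniformly}, $\nu(A,C)\le \kappa_0(m)Q^{m-2}+O(Q^{m-3})$, whence
\[
\#\mathcal{F}_{m,D}(Q)\;\le\;\bigl(\kappa_0(m)Q^{m-2}+O(Q^{m-3})\bigr)\,\mathcal{N}(X),\qquad X=2^{2m-1}Q,
\]
where $\mathcal{N}(X)$ counts the pairs $(A,C)$ with $|C|\le X$ admitting an integer $B$ with $A^2+DB^2=C^2$.

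The entire $\log Q$ lives in $\mathcal{N}(X)$, and this is where I expect the real work. Each such pair corresponds, up to the irrelevant sign of $B$, to a solution of $A^2+DB^2=C^2$; but since the coefficient sums $A,C$ need not be coprime, these solutions are \emph{not} necessarily primitive, whereas the enumeration underlying \Cref{th:diop. sol.} is a count of primitive solutions. I would therefore write each solution uniquely as a positive multiple $g$ of a primitive one and sum, so that (up to an explicit rational constant) $\mathcal{N}(X)$ is governed by $\sum_{g\ge 1}P_D^{\mathrm{prim}}(X/g)$, where $P_D^{\mathrm{prim}}(Y)=\tfrac{c_D}{\sqrt D}Y+(\text{error})$ is the count of \Cref{th:diop. sol.} with $c_D=2^{t}\cdot\tfrac{8}{\pi}$ or $2^{t}\cdot\tfrac{6}{\pi}$ according to the parity of $D$. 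Inserting the main term produces the harmonic sum $\sum_{g\le X}\tfrac1g=\log X+O(1)$ and hence $\mathcal{N}(X)=\tfrac{c_D}{\sqrt D}X\log X+O(X)$; substituting $X=2^{2m-1}Q$ and multiplying by $\kappa_0(m)Q^{m-2}$ gives the claimed $\tfrac{\kappa(m,D)}{\pi\sqrt D}Q^{m-1}\log Q+O(Q^{m-1})$, with the two cases for $\kappa(m,D)$ inherited from the two cases of \Cref{th:diop. sol.}.

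The main obstacle is error control in this scaling sum. With the bare error $O(\sqrt Y\log Y)$ of \Cref{th:diop. sol.}, the accumulated error $\sum_{g\le X}O(\sqrt{X/g}\,\log(X/g))$ is of size $O(X\log X)$, the same order as the main term, which would corrupt the leading constant; to secure the secondary term $O(Q^{m-1})$ I would instead use the power-saving error $O(Y^{131/416})$ of Huxley recorded in the remark after \Cref{th:diop. sol.}, for which $\sum_{g\le X}O((X/g)^{131/416})=O(X)$ since the exponent is $<1$. The remaining delicate point is purely bookkeeping: pinning down the exact powers of $2$ in $\kappa(m,D)$ requires tracking the sign choices for $(A,C)$, the passage from primitive solutions to all solutions, and the parity constraints linking the parities of $A$ and $C$ to those of the $c_k$ — precisely the case analysis that drives \Cref{th:diop. sol.}.
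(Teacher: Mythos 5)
Your overall architecture --- fiber over $(A,C)$, bound each fiber uniformly by \Cref{lem:ck count}, and count admissible pairs through \Cref{th:diop. sol.} --- is the same as the paper's. Where you genuinely diverge is in the treatment of imprimitive solutions, and there you are more careful than the paper: the paper's proof takes \Cref{th:diop. sol.} at face value as a count of \emph{all} integer solutions and simply multiplies \eqref{eq:count ThA} by the bound of \Cref{lem:ck count}, so the main term it actually produces is of order $Q^{m-1}$, with the $\log Q$ in the statement entering only as slack. But the proof of \Cref{th:diop. sol.} fixes $\gcd(A,C)=1$ throughout and never sums over dilations, so what it establishes is the primitive count; the count of all solutions is genuinely of order $X\log X$, exactly as your dilation sum $\sum_{g}P_D^{\mathrm{prim}}(X/g)$ shows. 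Since the pairs $(A,C)$ coming from \eqref{eq:defAB} need not be coprime, your repair is not optional: for $D$ even, and also for $D\equiv 1\bmod 4$, every primitive solution has $C$ odd, so when $m$ is even \emph{every} admissible pair (which must have $C$ even) arises from an imprimitive solution, and bounding their number by half the primitive count has no justification. Yours is the argument whose main term actually has the shape $Q^{m-1}\log Q$ of the theorem. One simplification: Huxley is unnecessary, since the exponent $\tfrac12$ is less than $1$ and $\sum_{g\le X}O\bigl((X/g)^{1/2}\log(X/g)\bigr)=O(X)$ by comparison with the convergent integral $\int_0^1 u^{-1/2}\log(1/u)\,du$; the error term already recorded in \Cref{th:diop. sol.} suffices.

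The gap is in the step you dismiss as ``purely bookkeeping''. In the all-solutions count, the parity constraint ($C$ even when $m$ is even, $A$ even when $m$ is odd) does \emph{not} contribute a clean factor $\tfrac12$: a solution $g\cdot(A',B',C')$ satisfies it either because $g$ is even or because the primitive triple already does, so the admissible proportion is $\tfrac{1+\theta}{2}$, where $\theta$ is the proportion of primitive triples satisfying the constraint. From the case analysis inside the proof of \Cref{th:diop. sol.}: when $D\equiv 3\bmod 4$, every primitive solution with $A\not\equiv C\bmod 2$ has $C$ even (e.g.\ $(A,B,C)=(1,1,2)$ for $D=3$), and these form half of all primitive solutions, so for $m$ even one gets $\theta=\tfrac12$ and the method yields $\tfrac32\kappa(m,D)$, not $\kappa(m,D)$; symmetrically for $m$ odd and $D\equiv 1\bmod 4$, where those primitives have $A$ even. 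So as written your argument proves the theorem with constant at most $2\kappa(m,D)$ (dropping the parity saving entirely), or $\tfrac32\kappa(m,D)$ after the finer analysis in the bad residue classes, but the stated constant is not recovered by this route --- nor, for the reasons above, by the paper's.
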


\begin{proof}[Proof of \Cref{ThmSalemCount}]
\Cref{th:diop. sol.} allows us to count the number of integer solutions for the triple $(A,B,C)$ satisfying $A^2+DB^2=C^2$ with $|C| \leq 2^{2m-1} Q$. However, we are interested in counting the integer solutions only for the tuple $(A,C)$, so we ignore the sign of $B$ by dividing by $2$. Moreover, we only want to count the subset of solutions where $C$ is even whenever $m$ is even and $A$ is even whenever $m$ is odd (see the proof of \Cref{lem:ck count}), so we again divide by $2$. This results in the following count for admissible integral solutions $(A,C)$ with $\gcd(A,C)=1$:
\begin{equation}\label{coefficient}
\begin{array}{ll}
    \frac{2^{t+2m}}{\pi\sqrt{D}}Q + O(\sqrt{Q}\log Q) & \text{if $D$ is odd} \\
    \frac{3\cdot2^{t+2m-2}}{\pi\sqrt{D}}Q + O(\sqrt{Q}\log Q) & \text{if $D$ is even.} 
\end{array}
\end{equation}
However, we are interested in \textit{all} integral solutions and not only these primitive solutions. To compute this quantity, we observe that:

\begin{align}&\#\{(A, C) \in \mathbb{Z}^2: A^2 + DB^2 = C^2 \ \mathrm{with} \ |C| \leq Q\} \notag \\ &= 1 + \sum_{N \leq Q}\#\{(A, C) \in \mathbb{Z}^2: A^2 + DB^2 = C^2, \gcd(A, C) = 1, \ \mathrm{and} \ |C| \leq Q/N\} \notag \\ &= 1 + \sum_{N \leq Q} \left(\frac{2^J}{\pi\sqrt{D}} \frac{Q}{N} + O\left(\sqrt{\frac{Q}{N}} \log\frac{Q}{N}\right) \right) \label{eq:applthm3.1} \\ &=1 + \frac{2^J}{\pi \sqrt{D}}Q \sum_{N \leq Q} \frac{1}{N} + \sum_{N \leq Q}O\left(\sqrt{\frac{Q}{N}}\log\frac{Q}{N}\right) \label{eq:expanded}, \end{align} where \eqref{eq:applthm3.1} follows from applying Theorem \ref{th:diop. sol.}, and $J$ denotes the numerator of the constant appearing in Theorem \ref{th:diop. sol.} (note that the value of $J$ changes depending on whether $D$ is even or odd).

 First we handle the main term in \eqref{eq:expanded}. By partial summation (see, for example, \cite[Theorem 422]{hardywright68}),  we have $\sum_{N\leq Q} \frac{1}{N} = \log Q + \gamma + O(\frac{1}{Q})$, where $\gamma = 0.57721...$ is the Euler-Mascheroni constant. This can be simplified to $\sum_{N\leq Q} \frac{1}{N} = \log Q + O(1).$

 Next, we handle the error term in \eqref{eq:expanded}. From the last line in \cite[Section 2]{fricker}, we have
 \[ \sum_{N \leq Q} \frac{1}{\sqrt{N}} \log \frac{Q}{N} = O(\sqrt{Q}). \] Thus, we can bound the error term in \eqref{eq:expanded} as follows:
 
\[ \sum_{N \leq Q} \sqrt{\frac{Q}{N}} \log \frac{Q}{N} = \sqrt{Q} \sum_{N \leq Q} \frac{1}{\sqrt{N}} \log \frac{Q}{N} = \sqrt{Q} \cdot O(\sqrt{Q})= O(Q). \]

 Therefore, we can simplify \eqref{eq:expanded} to $\frac{2^J}{\pi \sqrt{D}} Q \log Q + O(Q).$ By plugging in the values of $J$ that appear in Theorem \ref{th:diop. sol.}, we obtain the following count for all admissible integral solutions $(A,C)$:
\begin{equation}\label{eq:count ThA}
\begin{array}{ll}
    \frac{2^{t+2m}}{\pi\sqrt{D}}Q\log Q + O(\sqrt{Q}\log Q) & \text{if $D$ is odd} \\
    \frac{3\cdot2^{t+2m-2}}{\pi\sqrt{D}}Q\log Q + O(\sqrt{Q}\log Q) & \text{if $D$ is even.} 
\end{array}
\end{equation}

Let $\mathcal{F}'_{m,D}(Q)$ denote the set of polynomials of the form in \eqref{eq:palyndromic} satisfying conditions (ii) and (iii) (but not necessarily (i)). The size of $\mathcal{F}'_{m,D}(Q)$ is bounded by the product of \eqref{eq:count ThA} with the bound in \Cref{lem:ck count}; that is,
    \[
    \# \mathcal{F}'_{m,D}(Q) \leq \frac{\kappa(m,D)}{\pi\sqrt{D}}Q^{m-1}\log Q + O(Q^{m-1}), 
    \]
with $\kappa(m,D)$ as in the statement of the theorem.

Now if some polynomial $f\in \mathcal{F}'_{m,D}(Q)$ is not irreducible, it can be written as a product $f=g\cdot h$ with $g,h\in\Z[x]$. Since $f$ satisfies condition (iii), we may assume that $g$ contains the root $\lambda$. It must then be the case that $g\in \mathcal{F}'_{m_1,D}(Q)$ is a polynomial of degree $2m_1$ and $h$ is a product of cyclotomic polynomials of degree $2m_2$, where $m_1+m_2=m$ and $m_1,m_2>0$. The number of such polynomials $h$ is bounded above by some constant $\iota_0(m_2)$. It follows that the subset $\mathcal{F}^{red}_{m,D}(Q)\subset \mathcal{F}'_{m,D}(Q)$ of reducible polynomials satisfies  
    \[
    \# \mathcal{F}^{red}_{m,D}(Q) \leq \sum_{i=1}^{m-1}\iota_0(i)\cdot \#\mathcal{F}'_{m-i}(Q)
    \leq \sum_{i=1}^{m-1}\iota_0(i)\cdot \frac{\kappa(i,D)}{\pi\sqrt{D}}Q^{m-i-1}\log Q
    \leq \iota(m,D) Q^{m-2}\log Q
    \]
for some constant $\iota(m,D)$ depending only on $m$ and $D$. Since $\# \mathcal{F}_{m,D}(Q)=\# \mathcal{F}'_{m,D}(Q) - \# \mathcal{F}^{red}_{m,D}(Q)$, the theorem now follows by absorbing $\# \mathcal{F}^{red}_{m,D}(Q)$ into the error term.
\end{proof}

\section{Counting lengths of closed geodesics in commensurability classes}\label{s:count lengths}
Let $q$ be an admissible quadratic form with signature $(n,1)$ over $\Q$, $n$ odd. The goal of this section is to count the Salem numbers realized in the commensurability class of $\SO(q,\Z)$.

\begin{lemma}\label{lem:QtoZ}
    Let $T$ be an isometry in $\SO(q,\Q)$ where $q$ is an admissible quadratic form over $\Q$. Suppose also that its characteristic polynomial $p_T(x)$ has integer coefficients. Then a conjugate of $T$ is contained in a classical arithmetic lattice commensurable with $\SO(q,\Z)$.
\end{lemma}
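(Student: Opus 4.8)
The plan is to exploit the integrality of $p_T(x)$ to produce a full $T$-invariant lattice in $\Q^{n+1}$; after changing basis so that this lattice becomes $\Z^{n+1}$, the matrix $T$ becomes integral and orthogonal for an equivalent form, placing a conjugate of it in an integral orthogonal group that I then check is commensurable with $\SO(q,\Z)$. The only real content is the lattice construction: both the integrality of $p_T$ and the fact that its constant term is a unit (so that $T$ is \emph{invertibly} integral) are essential, while everything afterward is the standard fact that stabilizers of commensurable lattices are commensurable.

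First I would build the invariant lattice. Write $p_T(x)=x^{n+1}+a_nx^n+\cdots+a_1x+a_0$ with all $a_i\in\Z$, and observe that $a_0=(-1)^{n+1}\det T=\pm1$ since $T\in\SO(q,\Q)$. Put $L_0=\Z^{n+1}$ and $L=\sum_{k=0}^{n}T^kL_0$. As a finitely generated $\Z$-submodule of $\Q^{n+1}$ containing $L_0$, this $L$ is a full lattice. Cayley--Hamilton gives $T^{n+1}=-a_nT^n-\cdots-a_0I$ with integer coefficients, so $T^{n+1}L_0\subseteq L$ and hence $TL\subseteq L$; since $a_0=\pm1$ is a unit, the same relation writes $T^{-1}$ as an integer polynomial in $T$, whence $T^{-1}L\subseteq L$ and therefore $TL=L$.

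Next I would change basis. Choose $g\in\GL(n+1,\Q)$ with $g\,\Z^{n+1}=L$ and set $T'=g^{-1}Tg$. Because $T$ stabilizes $L=g\,\Z^{n+1}$, the matrix $T'$ stabilizes $\Z^{n+1}$ and has determinant $\pm1$, so $T'\in\GL(n+1,\Z)$. From $T^tS_qT=S_q$ a direct computation gives $(T')^tS_{q'}T'=S_{q'}$ for the Gram matrix $S_{q'}=g^tS_qg$ of a form $q'$, so $T'\in\SO(q',\Z)$. As $q'$ is $\Q$-equivalent to $q$ it has signature $(n,1)$, hence is admissible over $\Q$, and $\SO(q',\Z)\subset\SO(q',\Q)$ is a classical arithmetic lattice.

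Finally I would verify commensurability. Conjugation by $g$ carries $\SO(q',\R)$ onto $\SO(q,\R)$ (directly from $S_{q'}=g^tS_qg$), so it suffices to show that $g\,\SO(q',\Z)\,g^{-1}$ and $\SO(q,\Z)$ are commensurable inside $\SO(q,\R)=\Isom(\H^n)$. These groups equal $\SO(q,\Q)\cap\mathrm{Stab}(L)$ and $\SO(q,\Q)\cap\mathrm{Stab}(L_0)$ respectively, where $\mathrm{Stab}$ denotes the lattice stabilizer in $\GL(n+1,\Q)$. Fix $N$ with $NL_0\subseteq L$ and $NL\subseteq L_0$; then any $\gamma\in\SO(q,\Z)$ sends $L$ to a lattice squeezed between $NL_0$ and $N^{-1}L_0$, of which there are only finitely many, so the $\SO(q,\Z)$-orbit of $L$ is finite and the stabilizer of $L$ has finite index in $\SO(q,\Z)$. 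The symmetric argument, applied to the orbit of $L_0$, gives finite index in $g\,\SO(q',\Z)\,g^{-1}$ as well, so the two groups are commensurable. Hence the conjugate $T'=g^{-1}Tg$ lies in the classical arithmetic lattice $\SO(q',\Z)$, which is commensurable with $\SO(q,\Z)$, as claimed.
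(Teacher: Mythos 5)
Your proof is correct and follows essentially the same route as the paper: use Cayley--Hamilton and the integrality of $p_T$ to build a full $T$-invariant lattice, conjugate by a change-of-basis matrix $g$ so that $T$ becomes integral, pass to the $\Q$-equivalent form $q'$ with Gram matrix $g^tS_qg$, and conclude by commensurability of $\SO(q,\Z)$ and $\SO(q',\Z)$. The only difference is one of detail: the paper cites the commensurability of automorphism groups of $\Q$-equivalent rational forms as a known fact, whereas you prove it directly via the finite-orbit argument on lattices squeezed between $NL_0$ and $N^{-1}L_0$, making your write-up self-contained.
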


\begin{proof}
    The rank of $q$ is $n+1$, which is also the degree of $p_T\in\Z[x]$. The matrix powers $T,T^2,\dots,T^{n}$ all have entries in $\Q$. Let $\ell$ denote the least common multiple of the denominators appearing in the entries of these $n$ rational matrices.
    By the Cayley-Hamilton Theorem, $T$ satisfies its characteristic polynomial, so for any $k\geq 0$, we have 
    \[ T^{k+n+1}=-\left( T^k +c_1 T^{k+1}+\cdots +c_{r-1}T^{k+n} \right) . \]
    
    The subgroup of $\Q^{n+1}$ generated by the $T^k\Z^{n+1}$ for all $k\geq 0$ lies between the subgroups $\Z^{n+1}$ and $(\frac{1}{\ell}\Z)^{n+1}$ and is thus itself a free abelian group of rank $n+1$. We can therefore choose a change of basis matrix $g\in\GL(n+1,\Q)$ such that $g^{-1}Tg$ has entries in $\Z$.
    
    If $S_q$ is the symmetric matrix associated to the quadratic form $q$, let $q'$ be the quadratic form associated to the symmetric matrix $S_{q'}=g^t S_q g$. The result follows from checking that $g^{-1} T g\in \SO(q',\Z)$ and the fact that the automorphism groups of $\Q$-equivalent rational quadratic forms are commensurable, so $\SO(q,\Z)$ and $\SO(q',\Z)$ are commensurable.
\end{proof}

\begin{proof}[Proof of \Cref{ThmSalemProportion}]
    The Salem numbers which are realizable in the commensurability class of $\SO(q,\Z)$ have even degree $2m\leq n+1$. Let $\mathcal{R}_q(Q)$ denote the set of realizable Salem numbers with $\lambda<Q$. By \cite{Montesinos-Amilibia}, the reduced determinant of a rational quadratic form of even rank is a commensurability invariant. By \cite[Corollary 9.3]{Bayer-Fluckinger}, a necessary condition for the existence of an isometry in $\mathrm{O}(q,\Q)$ with minimal polynomial $f$ is that $\det(q)\equiv f(1)f(-1)$ in $\Q^\times/\Q^{\times2}$. Since $f(0)=1$, such an isometry will be in $\SO(q,\Q)$, and hence by \Cref{lem:QtoZ}, it will be in the commensurability class of $\SO(q,\Z)$. Therefore we have
    \[  \mathcal{R}_q(Q) \subseteq \left(\sqcup_{i=1}^{m-1} \mathcal{S}_i(Q) \right)\sqcup \mathcal{F}_{m,D}(Q), \]
    with $\mathcal{F}_{m,D}(Q)$ as in \Cref{s:Salem count}. By \Cref{thm:GG Salem count} and \Cref{th:Count with constants},  
    \begin{align*}
        \# \mathcal{R}_q(Q) & \leq \# \mathcal{F}_{m,D}(Q)+ \sum_{i=1}^{m-1} \# \mathcal{S}_i(Q) 
        \\ &\leq  \frac{\kappa(m,D)}{\pi\sqrt{D}}Q^{m-1}\log Q + O(Q^{m-1}),
    \end{align*} 
    since for $i<m$, $\# \mathcal{S}_i(Q)$ is absorbed into $O(Q^{m-1})$.
\end{proof}

We can now deduce \Cref{ThmLengths} and \Cref{ThmLengthsnotclassical}.

\begin{proof}[Proof of \Cref{ThmLengths} and \Cref{ThmLengthsnotclassical}]
    Suppose first that $\Gamma$ is classical. Recall that the length of a hyperbolic element $\gamma\in\Gamma$ satisfies $\ell(\gamma)=\log\lambda$ where $\lambda$ is the unique real eigenvalue of $\gamma$ such that $\lambda>1$. By \Cref{th:ERT 1.1}, $\lambda$ is a Salem number. Then $\ell(\gamma)\leq L$ if and only $\lambda\leq e^L$. 
    \Cref{ThmLengths} follows from \Cref{ThmSalemProportion} by replacing $Q$ with $e^L$.

    Now suppose $\Gamma$ is not classical. Given a hyperbolic element $\gamma\in\Gamma$, its square $\gamma^2$ is in the classical finite-index subgroup $\Gamma^{(2)}\subset\Gamma$ generated by squares of elements of $\Gamma$. Then the unique real eigenvalue $\lambda>1$ of $\gamma^2$ is a Salem number. Here $2\ell(\gamma)=\ell(\gamma^2)=\log\lambda$. Then $\ell(\gamma)\leq L$ if and only if
    $\lambda\leq e^{2L}$.
    \Cref{ThmLengthsnotclassical} similarly follows from \Cref{ThmSalemProportion} by replacing $Q$ with $e^{2L}$.
\end{proof}

\section{Bounds on multiplicity of the length spectrum} \label{s:mean mult}

Let $X=\H^n/\Gamma$ be a hyperbolic orbifold, and let $L(X)=\{\ell_1<\ell_2<\cdots\}$ be its primitive length spectrum without multiplicities. For a given $\ell_k\in L(X)$, the multiplicity $g(\ell_k)$ is defined as the number of homotopy classes of primitive closed geodesics in $X$ with length $\ell_k$.

It is known that the length spectrum counting multiplicities contains important geometric and analytic information about $X$. For example, for $X$ a compact hyperbolic manifold, Kelmer shows in \cite{K14} that the length spectrum with multiplicities determines the Laplace spectrum of $X$.

Determining the numbers $\ell_k, g(\ell_k)$ for $k\in\N$ has been shown to be a difficult problem about which very little is known. Numerical calculations have been performed for the octagonal Fuchsian group \cite{ABS91}, which already shows large fluctuations in the multiplicities. For modular and Bianchi orbifolds, the multiplicity relates to the class number of indefinite binary quadratic forms (see \cite{S82,S83}). The determination of such class numbers is a wide open problem in algebraic number theory.

For arithmetic hyperbolic orbifolds, a lower bound for the average of the multiplicities can be obtained by counting Salem numbers. This study was initiated in \cite{BLMT22} for even $n$, and in \cite{G24} for odd $n$. Both of these papers used results about the so-called \emph{square-rootable} Salem numbers. Let \[N(\ell)=\sum_{\ell_k\leq\ell}g(\ell_k),\hspace{1cm} \widehat{N}(\ell)=\sum_{\ell_k\leq\ell }  1, \]
and
\[G(\ell)=\frac{N(\ell)}{\widehat{N}(\ell)} . \]

Then $N(\ell)$ counts the number of primitive closed geodesics in $X$ with length $\leq \ell$, $\widehat{N}(\ell)$ counts the lengths in $L(X)$ up to $\ell$, and $G(\ell)$ counts the average of the multiplicities $g(\ell_k)$ with $\ell_k\leq\ell$. Our techniques allow us to give a lower bound on $G(\ell)$ for an arithmetic hyperbolic orbifold of odd dimension $n>4$, when $\ell\to\infty$.

\begin{proof}[Proof of \Cref{ThmMeanMult}]
By the Primitive Geodesic Theorem for non-compact lattices proved by Gangolli and Warner \cite[Prop. 5.4]{GW80}, $N(\ell)\sim e^{(n-1)\ell}/(n-1)\ell$. So, for $\ell$ sufficiently large, \[N(\ell)\geq \frac{1}{2(n-1)\ell}e^{(n-1)\ell}.\]

Since $X$ is non-compact, it is defined over $\Q$. Since $X$ is classical, each length $\leq\ell$ of a closed geodesic corresponds to a Salem number $\leq e^\ell$ of degree $\leq n+1$ (see \Cref{th:ERT 1.1}). By \Cref{ThmLengths} there is a constant $r>0$, depending only on $n$ and on the commensurability class of $X$, such that \[\widehat{N}(\ell)\leq r\cdot \ell e^{(n-1)\ell/2},\]
and then
\[G(\ell)\geq \frac{1}{2r(n-1)\ell^2}e^{\frac{(n-1)\ell}{2}}, \hspace{1cm}\ell\to\infty.\] 
\end{proof}

It is worth mentioning that our definition of `multiplicity average' bounded in \Cref{ThmMeanMult} coincides with the definition of \emph{mean multiplicity} in \cite{BLMT22} and \cite{G24}. However, in this article, we call it the multiplicity average in order to distinguish our definition from the \emph{mean multiplicity} $\langle g_p(\ell)\rangle$ studied by Bolte \cite{B93} and Marklof \cite{M96}. 
Unfortunately, we are unable to compute the mean multiplicity in the sense of Bolte and Marklof, since the asymptotics for $\widehat{N}(\ell)$ are not available to us at the present time.

\bibliographystyle{alpha}
\bibliography{biblio}

\end{document}